\newtheorem{proposition}{Proposition}[section]
  \newtheorem{theorem}[proposition]{Theorem}
  \newtheorem{lemma}[proposition]{Lemma}
  \newtheorem{corollary}[proposition]{Corollary}
\theoremstyle{definition}
  \newtheorem{remark}[proposition]{Remark}
\newcommand{\cst}{\ifmmode\mathrm{C}^*\else{$\mathrm{C}^*$}\fi}
\newcommand{\st}{\;\vline\;}
\newcommand{\tens}{\otimes}
\newcommand{\vtens}{\,\bar{\otimes}\,}
\newcommand{\I}{\mathds{1}}
\newcommand{\comp}{\!\circ\!}
\newcommand{\id}{\mathrm{id}}
\newcommand{\is}[2]{\left\langle#1\,\vline\,#2\right\rangle}
\newcommand{\qqquad}{\quad\qquad}
\newcommand{\hh}[1]{\widehat{#1}}
\newcommand{\NN}{\mathbb{N}}
\newcommand{\RR}{\mathbb{R}}
\newcommand{\GG}{\mathbb{G}}
\newcommand{\HH}{\mathbb{H}}
\newcommand{\KK}{\mathbb{K}}
\newcommand{\sN}{\mathsf{N}}
\newcommand{\sJ}{\mathsf{J}}
\newcommand{\gN}{\mathfrak{N}}
\newcommand{\uu}{\text{\rm\tiny{u}}}
\DeclareMathOperator{\B}{\mathsf{B}}
\DeclareMathOperator{\C}{C}
\DeclareMathOperator{\M}{\mathsf{M}}
\DeclareMathOperator{\Dom}{\mathsf{D}}
\newcommand{\staru}{\coasterisk}
\newcommand{\cleq}{\preccurlyeq}
\newcommand{\ww}{\mathrm{W}}
\newcommand{\Ww}{\mathds{W}}
\newcommand{\wW}{\text{\reflectbox{$\Ww$}}\:\!} 
\newcommand{\dd}[1]{\widetilde{#1}}
\DeclareMathOperator{\Idem}{Idem}
\DeclareMathOperator{\Idemn}{Idem_{\text{\tiny{\rm{nor}}}}}
\DeclareMathOperator{\Linf}{\mathnormal{L}^\infty\;\!\!}
\DeclareMathOperator{\Ltwo}{\mathnormal{L}^2\;\!\!}
\numberwithin{equation}{section}
\DeclareMathAlphabet{\mathpzc}{OT1}{pzc}{m}{it}
\begin{document}

\author{Pawe{\l} Kasprzak}
\address{Department of Mathematical Methods in Physics, Faculty of Physics, University of Warsaw, Poland}
\email{pawel.kasprzak@fuw.edu.pl}

\author{Piotr M.~So{\l}tan}
\address{Department of Mathematical Methods in Physics, Faculty of Physics, University of Warsaw, Poland}
\email{piotr.soltan@fuw.edu.pl}

\title[Lattice of Idempotent States]{Lattice of Idempotent States on a Locally Compact Quantum Group}

\keywords{idempotent state, locally compact quantum group, open quantum subgroup}

\subjclass[2010]{Primary: 46L89, Secondary: 43A05, 20G42}


\begin{abstract}
We study lattice operations on the set of idempotent states on a locally compact quantum group corresponding to the operations of intersection of compact subgroups and forming the subgroup generated by two compact subgroups. Normal ($\sigma$-weakly continuous) idempotent states are investigated and a duality between normal idempotent states on  a locally compact quantum group $\mathbb{G}$ and on its dual $\widehat{\mathbb{G}}$ is established. Additionally we analyze the question when a left coideal corresponding canonically to an idempotent state is finite dimensional and give a characterization of normal idempotent states on compact quantum groups.
\end{abstract}

\maketitle

\section{Introduction}\label{intro}

Idempotent states on locally compact quantum groups have been investigated in a number of papers, e.g.~\cite{Pal,FrSk09-1,FrSk09-2,BFS,SaSk12,FST,Salmi,SaSk,KK,FK}. The main idea behind these investigations was based on the classical result of Kawada and It\^o which establishes a bijection between idempotent states on a classical locally compact group $G$ and compact subgroups of $G$ with the state given by integration with respect to the Haar measure of the subgroup. As shown first by Pal in \cite{Pal} this bijection is no longer there when we replace $G$ by a quantum group (even a finite one). Nevertheless the set of idempotent states on a locally compact quantum group does carry a rich structure which has been explored by many authors. In most approaches the idea that each idempotent state corresponds to a certain ``subgroup-like'' object has been (more or less explicitly) one of the main motivations for investigating these objects. It is worth mentioning that a similar idea was explored in \cite{BBS} where the ``subgroup-like'' object described by a certain Hilbert space vector was called ``pr\'e sous-groupe.''

In this paper we will also treat idempotent states as corresponding to certain -- purely virtual -- objects which we choose to call ``quasi-subgroups'' and we will study properties of the set of idempotent states from the point of view of lattice operations usually performed on subgroups (see Section \ref{Lattice}). Let us mention that for finite quantum groups this program was carried out in \cite{FrSk09-1}. Our approach is that of general locally compact quantum groups in the sense of Kustermans and Vaes (\cite{KV}), but favoring the right Haar measure as in \cite{mnw}.

Let us remark that notions related to what we call a quasi-subgroup have appeared also in the context of uncertainty principles for quantum groups (\cite{uncert,young,uncertKac}). In particular some of our results have analogs phrased in terms of group-like projections as well as so called ``biprojections''. However, since the papers \cite{uncert,young,uncertKac} do not make the connection with idempotent states, the proofs tend to be much less direct and involve sophisticated tools which are not needed in our approach.

Throughout the paper $\GG$ will denote a locally compact quantum group and according to standard conventions the symbols $\C_0(\GG)$, $\C_0^\uu(\GG)$ and $\Linf(\GG)$ stand for the corresponding reduced and universal \cst-algebras and the von Neumann algebra of ``non-commutative functions'' on $\GG$.

The right Haar measure on $\GG$ will be denoted by $\psi$ and $\Ltwo(\GG)$ will denote the GNS Hilbert space of $\psi$. We will use $\eta$ to denote the corresponding GNS map. In particular $\Dom(\eta)$ coincides with $\gN_\psi$, i.e.~the set of square integrable elements of $\Linf(\GG)$. For a vector subspace $\sN\subset\Linf(\GG)$ we will write $\Ltwo(\sN)$ for the closure of $\eta\bigl(\sN\cap\Dom(\eta)\bigr)$ in $\Ltwo(\GG)$ (this subspace may be $\{0\}$).

Throughout the paper $\Delta:\Linf(\GG)\to\Linf(\GG)\vtens\Linf(\GG)$ will be the comultiplication and $\Delta^\uu$ will denote the comultiplication on the universal \cst-algebra $\C_0^\uu(\GG)$. We will write $S$ for the antipode and $R$ and $\tau=(\tau_t)_{t\in\RR}$ for the unitary antipode and scaling group of $\GG$ (see \cite{KV,mn,mnw} for definitions and e.g.~\cite{DKSS} for the details of notation). The objects related to the dual locally compact quantum group $\hh{\GG}$ will be denoted by appropriate symbols decorated with hats, e.g.~$\hh{\Delta}$ or $\hh{\psi}$. We will also use notation introduced originally in \cite{DKSS} according to which $\ww\in\M(\C_0(\hh{\GG})\tens\C_0(\GG))\subset\Linf(\hh{\GG})\vtens\Linf(\GG)$ is the Kac-Takesaki operator of $\GG$ (\cite{mnw}, it is the \emph{right regular representation} in the terminology of \cite{KV}) and $\wW\in\M(\C_0(\hh{\GG})\tens\C_0^\uu(\GG))$ is its ``half-lifted'' version (cf.~\cite{univ,DKSS}).

Let us recall the notion of a \emph{left coideal} which will be of crucial importance throughout the paper: we say that a von Neumann subalgebra $\sN\subset\Linf(\GG)$ is a left \emph{coideal} if $\Delta(\sN)\subset\Linf(\GG)\vtens\sN$. It follows that $\Delta$ defines an \emph{action} of $\GG$ on $\sN$. A left coideal $\sN$ is \emph{integrable} if $\psi$ is semifinite on $\sN$. In fact this condition is implied by existence of just one non-zero integrable positive element of $\sN$ (cf.~\cite[Proposition 3.3]{KaKhSo}).

In Section \ref{IdempSt} we will recall the notion of an idempotent state and introduce various objects related to one. We also prove some of their basic properties and summarize results of the papers \cite{KK,FK} which are used later throughout the paper. Next, in Section \ref{Lattice} we describe the lattice operations on the set of idempotent states which respectively correspond to intersection of quasi-subgroups and to the quasi-subgroup generated by two quasi-subgroups (we leave it to the reader to check that in case of idempotent states of Haar type (see  Section \ref{Lattice}) these operations indeed coincide with certain operations on compact quantum subgroups). In Subsection \ref{modLaw} we prove a form of the modular law for the lattice operations discussed earlier.

Section \ref{openSect} is devoted to the study of idempotent states which are in addition \emph{normal} (arise from $\sigma$-weakly continuous states on $\Linf(\GG)$). By definition they correspond to \emph{open} quasi-subgroups. We find an interesting duality for such objects: there is a bijective correspondence between normal idempotent states on $\GG$ and normal idempotent states on $\hh{\GG}$ and performing this passage twice yields the original state. Among other results of Section \ref{openSect} we provide characterization of open quasi-subgroups of \emph{compact} quantum groups by finite dimensionality of the corresponding coideal (mimicking the space of cosets) and give several other results about finite dimensional coideals. Finally we discuss the lattice operations of Section \ref{Lattice} for normal idempotent states.

\section{Idempotent states, coideals and group-like projections}\label{IdempSt}

Let $\omega$ be a state on $\C_0^\uu(\GG)$. Recall that $\C_0^\uu(\GG)^*$ carries a structure of a Banach algebra with multiplication given by convolution which we will denote by
\[
\C_0^\uu(\GG)^*\times\C_0^\uu(\GG)^*\ni(\omega,\mu)\longmapsto\omega\staru\mu=(\omega\tens\mu)\comp\Delta^\uu\in\C_0^\uu(\GG)^*.
\]
We say that a state $\omega$ on $\C_0^\uu(\GG)$ is \emph{idempotent} if $\omega\staru\omega=\omega$. The set of idempotent states on $\C_0^\uu(\GG)$ will be denoted by $\Idem(\GG)$.

Now take $\omega\in\Idem(\GG)$. The formula
\[
E_\omega(x)=\omega\staru{x}=(\id\tens\omega)\bigl(\wW(x\tens\I)\wW^*\bigr),\qqquad{x}\in\Linf(\GG),
\]
defines a normal conditional expectation on $\Linf(\GG)$ and we will write $\sN_\omega$ to denote its range:
\[
\sN_\omega=\bigl\{\omega\staru{x}\st{x}\in\Linf(\GG)\bigr\}.
\]
Moreover $\sN_\omega$ is a $\tau$-invariant integrable left coideal in $\Linf(\GG)$. Let
\[
P_\omega=(\id\tens\omega)\wW\in\Linf(\hh{\GG}).
\]
It turns out that $P_\omega$ is a $\hh{\tau}$-invariant group-like projection in $\Linf(\hh{\GG})$ (which means that $\hh{\Delta}(P_\omega)(\I\tens{P_\omega})=P_\omega\tens{P_\omega}$) and furthermore it is the orthogonal projection onto $\Ltwo(\sN_\omega)$. In fact
\begin{equation}\label{eq1}
\eta\bigl(E_\omega(x)\bigr)=P_\omega\eta(x),\qqquad{x}\in\Dom(\eta).
\end{equation}

The papers \cite{KK} and \cite{FK} establish bijective correspondences between the sets of
\begin{itemize}
\item idempotent states $\omega$ on $\C_0^\uu(\GG)$,
\item integrable $\tau$-invariant left coideals $\sN$ in $\Linf(\GG)$,
\item group-like projections $P\in\Linf(\hh{\GG})$ invariant under the scaling group,
\end{itemize}
under which $\omega$ corresponds to $\sN_\omega$ and $P_\omega$.

Let $\mu,\nu\in\Idem(\GG)$. We will say that $\nu$ \emph{dominates} $\mu$ if $\mu\staru\nu=\nu$ and in this case we write $\mu\cleq\nu$. Note that this is clearly equivalent to $E_\mu\comp{E_\nu}=E_\nu$. The relation $\cleq$ defines a partial order on $\Idem(\GG)$. Note that our convention is the opposite to the one used in \cite{FrSk09-1}, as our notation aims to indicate the containment of quasi-subgroups rather than the natural order on idempotents in the Banach algebra $\C_0^\uu(\GG)^*$. Our convention agrees with the analogous notation in \cite{BBS}.

The next simple lemma will be used throughout the paper:

\begin{lemma}\label{lemcomp}
Let $\mu,\nu\in\Idem(\GG)$. Then the following are equivalent:
\begin{enumerate}[\indent\rm(1)]
\item $\mu\cleq\nu$,
\item $E_\mu\comp{E_\nu}=E_\nu$,
\item $\sN_\nu\subset\sN_\mu$,
\item $P_\nu\leq{P_\mu}$.
\end{enumerate}
\end{lemma}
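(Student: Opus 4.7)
The plan is to run around the cycle $(1)\Leftrightarrow(2)\Rightarrow(3)\Rightarrow(4)\Rightarrow(2)$, using in each step only the explicit identifications recalled before the statement: $\sN_\omega$ is the range of the normal conditional expectation $E_\omega$ (so $E_\omega$ acts as the identity on $\sN_\omega$), and $P_\omega$ is the orthogonal projection onto $\Ltwo(\sN_\omega)$, linked to $E_\omega$ by the intertwining relation \eqref{eq1}. The equivalence (1)$\Leftrightarrow$(2) has in essence been observed in the paragraph preceding the lemma: coassociativity of the convolution action of $\C_0^\uu(\GG)^*$ on $\Linf(\GG)$ gives $E_\mu\comp E_\nu = E_{\mu\staru\nu}$, and the map $\omega\longmapsto E_\omega$ is injective, so $E_\mu\comp E_\nu=E_\nu$ is the same statement as $\mu\staru\nu=\nu$.

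For (2)$\Leftrightarrow$(3) I would argue purely on the level of ranges. The implication (2)$\Rightarrow$(3) is one line:
\[
\sN_\nu = E_\nu\bigl(\Linf(\GG)\bigr) = E_\mu\bigl(E_\nu(\Linf(\GG))\bigr)\subset E_\mu\bigl(\Linf(\GG)\bigr) = \sN_\mu.
\]
Conversely, assuming (3), for every $x\in\Linf(\GG)$ the element $E_\nu(x)$ already lies in $\sN_\nu\subset\sN_\mu$, hence is fixed by $E_\mu$; this gives (2).

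Passing to the Hilbert-space picture, (3)$\Rightarrow$(4) is immediate: $\sN_\nu\subset\sN_\mu$ forces $\sN_\nu\cap\gN_\psi\subset\sN_\mu\cap\gN_\psi$, so $\Ltwo(\sN_\nu)\subset\Ltwo(\sN_\mu)$, and therefore the orthogonal projections onto these subspaces satisfy $P_\nu\leq P_\mu$. For the closing implication (4)$\Rightarrow$(2) I would use \eqref{eq1} directly: for any $x\in\Dom(\eta)$,
\[
\eta\bigl(E_\mu(E_\nu(x))\bigr)=P_\mu\,\eta\bigl(E_\nu(x)\bigr)=P_\mu P_\nu\,\eta(x)=P_\nu\,\eta(x)=\eta\bigl(E_\nu(x)\bigr),
\]
so injectivity of $\eta$ yields $E_\mu\comp E_\nu=E_\nu$ on the $\sigma$-weakly dense subspace $\gN_\psi$, and normality of both sides extends this equality to all of $\Linf(\GG)$.

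I do not foresee any serious obstacle: each of conditions (2), (3), (4) is a repackaging of essentially the same data, already known from \cite{KK,FK} to be in bijective correspondence with $\omega\in\Idem(\GG)$. The only point that needs a little care is the final passage from $\Dom(\eta)$ to all of $\Linf(\GG)$, which is handled by the normality of the conditional expectations; once this is in place, all four statements are immediate transcriptions of one another.
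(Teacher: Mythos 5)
Your proof is correct and follows essentially the same route as the paper's: (1)$\Leftrightarrow$(2) from the remark preceding the lemma, (2)$\Leftrightarrow$(3) by comparing ranges of the conditional expectations, (3)$\Rightarrow$(4) via $\Ltwo(\sN_\nu)\subset\Ltwo(\sN_\mu)$, and (4)$\Rightarrow$(2) by the computation $\eta(E_\nu(x))=P_\nu\eta(x)=P_\mu P_\nu\eta(x)=\eta(E_\mu(E_\nu(x)))$ using \eqref{eq1}. Your added care about extending from $\Dom(\eta)$ to all of $\Linf(\GG)$ by normality is a welcome explicit touch that the paper leaves implicit.
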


\begin{proof}
We already noted that $\mu\cleq\nu$ means simply $E_\mu\comp{E_\nu}=E_\nu$ which is equivalent to $\sN_\nu\subset\sN_\mu$. The latter condition implies that $\Ltwo(\sN_\nu)\subset\Ltwo(\sN_\mu)$ which, in turn, is equivalent to $P_\nu\leq{P_\mu}$. Conversely, suppose $P_\nu\leq{P_\mu}$, i.e.~${P_\mu}{P_\nu}=P_\nu$. Then for any $x\in\Dom(\eta)$
\[
\eta\bigl(E_\nu(x)\bigr)=P_\nu\eta(x)={P_\mu}{P_\nu}\eta(x)={P_\mu}\eta\bigl(E_\nu(x)\bigr)=\eta\bigl(E_\mu(E_\nu(x))\bigr),
\]
so $E_\nu=E_\mu\comp{E_\nu}$.
\end{proof}

We let $\Idem_0(\GG)$ denote the set $\Idem(\GG)\cup\{0\}$ and note that if the correspondences above were to be (albeit artificially) extended to $\Idem_0(\GG)$ then $0\in\Idem_0(\GG)$ would have to be associated with any non-integrable $\tau$-invariant left coideal in $\Linf(\GG)$. Moreover the partial order $\cleq$ extends to $\Idem_0(\GG)$ with $0$ being the largest element.

\section{The lattice of compact quasi-subgroups}\label{Lattice}

Let $\GG$ be a locally compact quantum group and let $\KK$ be a compact quantum subgroup of $\GG$. Then we have a surjective $*$-homomorphism $\C_0^\uu(\GG)\to\C^\uu(\KK)$ and composing this homomorphism with the Haar measure of $\KK$ yields an element of $\Idem(\GG)$ (cf.~\cite{DKSS}). Such idempotent states are referred to as states of \emph{Haar type}. As mentioned in Section \ref{intro}, the classical Kawada-It\^o theorem (\cite[Theorem 3]{KI}) says that in case $\GG$ is classical, these are the only examples of idempotent states. Furthermore, as demonstrated in \cite{Pal}, idempotent states on quantum groups are not necessarily of Haar type. Nevertheless it is tempting to treat each idempotent state on a locally compact quantum group $\GG$ as corresponding to a ``virtual'' compact quantum subgroup $\KK$ of $\GG$. We will therefore say that an idempotent state $\omega$ on $\C_0^\uu(\GG)$ corresponds to a \emph{compact quasi-subgroup} of $\GG$. In this convention the coideal $\sN_\omega$ corresponds to the von Neumann algebra of essentially bounded functions on the set of cosets of the ``quasi-subgroup.'' Let us emphasize, however, that the terminology of quasi-subgroups introduced above is of purely linguistic nature and does not involve any new mathematical objects.

It turns out that one can introduce operations on idempotent states which, in case of idempotent states of Haar type, correspond to intersections of subgroups and taking a subgroup generated by two subgroups supporting the states. As we already mentioned in Section \ref{intro} this program was carried out for finite quantum groups in \cite[Section 5]{FrSk09-1}. We will use notation similar to the one used in that paper, but our symbols ``$\wedge$'' and ``$\vee$'' will correspond to intersection of quasi-subgroups and quasi-subgroup generated by two quasi-subgroups respectively. This is the opposite convention to the one used in \cite{FrSk09-1}.

\subsection{Intersection of quasi-subgroups}

Let $\omega,\mu\in\Idem(\GG)$. Then the von Neumann subalgebra $\sN_\omega\vee\sN_\mu$ generated by $\sN_\omega$ and $\sN_\mu$ is clearly a $\tau$-invariant left coideal which, furthermore, is integrable (indeed, a coideal is integrable if it contains at least one non-zero integrable element, cf.~\cite[Proposition 3.3]{KK}). Therefore it must be of the form $\sN_\nu$ for some $\nu\in\Idem(\GG)$. This idempotent state will be denoted by $\omega\wedge\mu$ and the corresponding quasi-subgroup will be referred to as the \emph{intersection} of the quasi-subgroups corresponding to $\omega$ and $\mu$. Obviously the operation
\[
(\omega,\mu)\longmapsto\omega\wedge\mu
\]
is commutative and associative.

\subsection{Quasi-subgroup generated by two quasi-subgroups}

\begin{theorem}\label{vee}
Let $\omega,\mu\in\Idem(\GG)$. Then
\begin{enumerate}[\indent\rm(1)]
\item  $\Ltwo(\sN_\omega)\cap\Ltwo(\sN_\mu)=\Ltwo(\sN_\omega\cap\sN_\mu)$,
\item the sequence $\bigl((\omega\staru\mu)^{\staru{n}}\bigr)_{n\in\NN}$ is weak$^*$ convergent to $\nu\in\Idem_0(\GG)$,
\item\label{vee3} $\Ltwo(\sN_\omega)\cap\Ltwo(\sN_\mu)\neq\{0\}$ if and only if $\nu\neq{0}$  which is further equivalent to the fact that the coideal $\sN_\omega\cap\sN_\mu$ is non-zero and integrable; moreover, in this case, $\nu$ is the idempotent state corresponding to the $\tau$-invariant integrable left coideal $\sN_\omega\cap\sN_\mu$.
\end{enumerate}
\end{theorem}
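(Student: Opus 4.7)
The plan is to reduce everything to the strong-operator level via the identity $P_{(\omega\staru\mu)^{\staru n}}=(P_\omega P_\mu)^n$ and von Neumann's theorem on alternating projections. First, the pentagon-type identity $(\id\tens\Delta^\uu)\wW=\wW_{12}\wW_{13}$ forces $P_{\rho\staru\sigma}=P_\rho P_\sigma$ for arbitrary bounded functionals $\rho,\sigma$, so by induction $P_{(\omega\staru\mu)^{\staru n}}=(P_\omega P_\mu)^n$. Since $P_\omega$ and $P_\mu$ are orthogonal projections in $\Linf(\hh\GG)\subset\B(\Ltwo(\GG))$ with ranges $\Ltwo(\sN_\omega)$ and $\Ltwo(\sN_\mu)$, the classical alternating-projections theorem of von Neumann yields strong-operator convergence $(P_\omega P_\mu)^n\to P$, where $P$ is the orthogonal projection onto $\Ltwo(\sN_\omega)\cap\Ltwo(\sN_\mu)$.

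For (2), I would use the fact that the slices $\{(\phi\tens\id)\wW:\phi\in\Linf(\hh\GG)_*\}$ span a norm-dense subspace of $\C_0^\uu(\GG)$. For $a=(\phi\tens\id)\wW$ one has
\[
\bigl((\omega\staru\mu)^{\staru n}\bigr)(a)=\phi\bigl(P_{(\omega\staru\mu)^{\staru n}}\bigr)=\phi\bigl((P_\omega P_\mu)^n\bigr)\longrightarrow\phi(P),
\]
because strong convergence of uniformly bounded operators implies $\sigma$-weak convergence and $\phi$ is normal. Combined with the uniform bound $\|(\omega\staru\mu)^{\staru n}\|\leq 1$, pointwise convergence on this dense subspace upgrades to weak-$*$ convergence on $\C_0^\uu(\GG)$ towards a positive functional $\nu$ satisfying $P_\nu=P$ and $\|\nu\|\leq 1$. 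Next, $P_{\nu\staru\nu}=P_\nu^2=P^2=P=P_\nu$, and the injectivity of $\rho\mapsto P_\rho$ on bounded functionals (a direct consequence of the same density of slices) yields $\nu\staru\nu=\nu$. If $P=0$ then $\nu=0\in\Idem_0(\GG)$; otherwise the self-normalisation $\|\nu\|=\|\nu\staru\nu\|\leq\|\nu\|^2$ combined with $0<\|\nu\|\leq 1$ forces $\|\nu\|=1$, so $\nu$ is an idempotent state in $\Idem(\GG)$.

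For (1) and (3) I would invoke the bijection between idempotent states, group-like projections, and integrable $\tau$-invariant left coideals. The inequalities $P_\nu=P\leq P_\omega$ and $P\leq P_\mu$ together with Lemma \ref{lemcomp} give $\sN_\nu\subset\sN_\omega\cap\sN_\mu$, whence
\[
\Ltwo(\sN_\nu)\subset\Ltwo(\sN_\omega\cap\sN_\mu)\subset\Ltwo(\sN_\omega)\cap\Ltwo(\sN_\mu)=P\Ltwo(\GG)=\Ltwo(\sN_\nu),
\]
which collapses to equalities, establishing (1) (when $P=0$ all the inner spaces are trivially $\{0\}$). For (3), the condition $\Ltwo(\sN_\omega\cap\sN_\mu)\neq\{0\}$ is equivalent to the existence of a non-zero integrable positive element in $\sN_\omega\cap\sN_\mu$, which by \cite[Proposition 3.3]{KaKhSo} characterises integrability of the coideal; when $\sN_\omega\cap\sN_\mu$ is integrable the bijection assigns to it an idempotent state $\nu'$ with $\sN_{\nu'}=\sN_\omega\cap\sN_\mu$, and then the inclusions $\sN_\nu\subset\sN_{\nu'}\subset\sN_\omega$ and $\sN_{\nu'}\subset\sN_\mu$ combined with Lemma \ref{lemcomp} sandwich $P_\nu\leq P_{\nu'}\leq P=P_\nu$, forcing $\nu'=\nu$.

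The subtle step I expect to confront is the passage from the cleanly Hilbert-space convergence of $(P_\omega P_\mu)^n$ to weak-$*$ convergence on the potentially non-unital algebra $\C_0^\uu(\GG)^*$, and the simultaneous verification that the limit is actually a \emph{state} rather than merely a positive functional of smaller norm. The self-normalisation inequality $\|\nu\|=\|\nu\staru\nu\|\leq\|\nu\|^2$ is the small but decisive observation that closes this gap, while the density of the slices $(\phi\tens\id)\wW$ in $\C_0^\uu(\GG)$, a standard property of the half-lifted multiplicative unitary $\wW$, supplies the bridge between the operator-theoretic and the state-theoretic pictures.
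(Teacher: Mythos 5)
Your proof is correct and follows essentially the same route as the paper: the identity $(\id\tens(\omega\staru\mu)^{\staru n})(\wW)=(P_\omega P_\mu)^n$, von Neumann's alternating projection theorem, density of the slices of $\wW$ plus uniform boundedness to get weak$^*$ convergence, and the idempotent-norm argument $\|\nu\|\leq\|\nu\|^2$ to upgrade the positive limit functional to a state. The only (harmless) divergence is at the very end: where the paper shows $\sN_\omega\cap\sN_\mu\subset\sN_\nu$ directly by checking $E_\nu(x)=x$ on the dense set of square-integrable elements, you invoke the coideal--state bijection to produce $\nu'$ with $\sN_{\nu'}=\sN_\omega\cap\sN_\mu$ and sandwich $P_\nu\leq P_{\nu'}\leq P=P_\nu$; both arguments rest on the same integrability and $\tau$-invariance facts.
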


\begin{proof}
Denote $\nu_n=(\omega\staru\mu)^{\staru{n}}$. We have
\begin{equation}\label{prod}
(\id\tens\nu_n)(\wW)=(P_{\omega}P_{\mu})^n.
\end{equation}
The right hand side of \eqref{prod} strongly converges to the projection $P$ onto $\Ltwo(\sN_\omega)\cap\Ltwo(\sN_\mu)$. In particular, for all vector functionals $\upomega_{\xi}$ (with $\xi\in\Ltwo(\GG)$) we have
\[
\nu_n((\upomega_{\xi}\tens\id)(\wW))\xrightarrow[n\to\infty]{}\upomega_{\xi}(P).
\]
Since the set $\bigl\{(\upomega_{\xi}\tens\id)(\wW)\st\xi\in\Ltwo(\GG)\bigr\}$ is linearly dense in $\C_0^\uu(\GG)$ and the sequence $(\nu_n)_{n\in\NN}$ is uniformly bounded we conclude that, for all $a\in\C_0^\uu(\GG)$ the sequence $\bigl(\nu_n(a)\bigr)_{n\in\NN}$ is convergent, i.e.~$(\nu_n)_{n\in\NN}$ is weak$^*$ convergent. Denoting its weak$^*$ limit by $\nu$  we have $P=(\id\otimes\nu)(\wW)$. Since $P^2=P=PP_\omega=P_{\omega}P=PP_\mu=P_{\mu}P$ we conclude that $\nu\staru\nu=\nu=\nu\staru\omega=\omega\staru\nu=\nu\staru\mu=\mu\staru\nu$.

As $\nu$ is a weak$^*$ limit of states, it satisfies $\|\nu\|\leq 1$. On the other hand, if $\nu\neq{0}$ then $\nu$, being an idempotent in the Banach algebra $\C_0^\uu(\GG)^*$, satisfies $\|\nu\|\geq{1}$. It follows that either $\nu=0$ or $\nu$ is an idempotent state, i.e.~$\nu\in\Idem_0(\GG)$.

Suppose further that $\nu\neq{0}$. We shall show that in this case $\sN_\nu=\sN_\omega\cap\sN_\mu$. Clearly, $E_\omega\comp{E_\nu}=E_{\omega\staru\nu}=E_\nu=E_{\mu\staru\nu}=E_{\mu}\comp{E_\nu}$, so that
\begin{equation}\label{cont1}
\sN_\nu\subset\sN_\omega\cap\sN_\mu.
\end{equation}
Since  the action of $\GG$ on $\sN_\nu$ is integrable, the same is true for the action on $\sN_\omega\cap\sN_\mu$. Let $x\in\Dom(\eta)\cap\sN_\omega\cap\sN_\mu$. Since  $\Ltwo(\sN_\omega\cap\sN_\mu)\subset\Ltwo(\sN_\omega)\cap\Ltwo(\sN_\mu)$ and $P_\nu$ is a projection onto $\Ltwo(\sN_\omega)\cap\Ltwo(\sN_\mu)$, we have $P_\nu\eta(x)=\eta(x)$, so by \eqref{eq1} with $\omega$ replaced by $\nu$ we get $E_\nu(x)=x$. Integrability of $\sN_\omega\cap\sN_\mu $ means that $\Dom(\eta)\cap\sN_\omega\cap\sN_\mu$ is dense in $\sN_\omega\cap\sN_\mu$ and thus $E_\nu(x)=x$ for all $x\in\sN_\omega\cap\sN_\mu$, i.e.~$\sN_\omega\cap\sN_\mu\subset\sN_\nu$ which together with \eqref{cont1} shows that $\sN_\nu=\sN_\omega\cap\sN_\mu$. In particular $\sN_\omega\cap\sN_\mu$ is non-zero and integrable. Finally, since $\Ltwo(\sN_\omega\cap\sN_\mu)\subset\Ltwo(\sN_\omega)\cap\Ltwo(\sN_\mu)$, this immediately implies $\Ltwo(\sN_\omega)\cap\Ltwo(\sN_\mu)\neq\{0\}$, so the three conditions in \eqref{vee3} are equivalent.
\end{proof}

We will denote the functional $\nu\in\Idem_0(\GG)$ described in Theorem \ref{vee} by $\omega\vee\mu$ and thereby define a binary operation on $\Idem(\GG)$. Note that $\omega\vee\mu=0$ if and only if $\Ltwo(\sN_\omega)\cap\Ltwo(\sN_\mu)=\{0\}$. Let us further extend the operation $\vee$ to $\Idem_0(\GG)$ by declaring $\omega\vee\mu=0$ if either (or both) functionals are zero. This way we obtain
\[
\Idem_0(\GG)\times\Idem_0(\GG)\ni(\omega,\mu)\longmapsto\omega\vee\mu\in\Idem_0(\GG)
\]
which is quite obviously associative and commutative. The quasi-subgroup corresponding to $\omega\vee\mu$ is the quasi-subgroup \emph{generated} by the quasi-subgroups related to $\omega$ and $\mu$ with the provision that $\omega\vee\mu=0$ when the corresponding quasi-subgroup is non-compact.

In particular, if $\GG$ is compact, then $\omega\vee\mu$ is a state (i.e.~it is non-zero) for all $\omega,\mu\in\Idem(\GG)$ because in this case $\C_0^\uu(\GG)=\C^\uu(\GG)$ is unital and hence the set of states is closed in the weak$^*$ topology.

\begin{remark}\label{order}
For $\alpha,\beta\in\Idem(\GG)$ the left coideal $\sN_{\alpha\wedge\beta}$ is the smallest left coideal containing both $\sN_\alpha$ and $\sN_\beta$ (it is automatically integrable and $\tau$-invariant). In particular $\alpha\wedge\beta\cleq\alpha$ and $\alpha\wedge\beta\cleq\beta$. Moreover $\alpha\wedge\beta$ is the largest (in the sense of the partial order $\cleq$) idempotent state dominated by both $\alpha$ and $\beta$:
\[
\alpha\wedge\beta=\sup\bigl\{\omega\in\Idem(\GG)\st\omega\cleq\alpha,\:\omega\cleq\beta\bigr\}.
\]

Similarly $\sN_{\alpha\vee\beta}$ is the largest integrable $\tau$-invariant left coideal contained in both $\sN_\alpha$ and $\sN_\beta$ (it is $\{0\}$ if their intersection is not integrable). Consequently $\alpha\cleq\alpha\vee\beta$, $\beta\cleq\alpha\vee\beta$ and $\alpha\vee\beta$ is the smallest (again in the sense of $\cleq$) element of $\Idem_0(\GG)$ dominating both $\alpha$ and $\beta$:
\[
\alpha\vee\beta=\inf\bigl\{\omega\in\Idem_0(\GG)\st\alpha\cleq\omega,\:\beta\cleq\omega\bigr\}.
\]
\end{remark}

\subsection{Modular law}\label{modLaw}

In the statement of the next theorem the symbol ``$\sigma-\text{\rm{c.l.s.}}$'' stands for ``$\sigma$-weakly closed linear span.''

\begin{theorem}\label{modlaw}
Let $\omega,\mu,\rho\in\Idem(\GG)$ be such that
\begin{enumerate}[\indent\rm(1)]
\item\label{modlaw1} $\rho\cleq\omega$,
\item $\rho\vee\mu=\rho\staru\mu$,
\item\label{modlaw3} $\sN_{\omega\wedge\mu}=(\sN_\omega\sN_\mu)^{\sigma-\text{\rm{c.l.s.}}}$.
\end{enumerate}
Then $\omega\wedge(\mu\vee\rho)=(\omega\wedge \mu)\vee\rho$.
\end{theorem}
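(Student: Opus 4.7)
The plan is to translate the asserted identity of idempotent states into an equality of the associated left coideals, and then to exploit hypothesis~(3) together with the conditional expectation $E_\rho$ to force the nontrivial inclusion.

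Unfolding definitions, the coideal corresponding to $\omega\wedge(\mu\vee\rho)$ is
\[
\sN_\omega\vee\sN_{\mu\vee\rho}=\sN_\omega\vee(\sN_\mu\cap\sN_\rho),
\]
while the one corresponding to $(\omega\wedge\mu)\vee\rho$ is
\[
\sN_{\omega\wedge\mu}\cap\sN_\rho=(\sN_\omega\vee\sN_\mu)\cap\sN_\rho.
\]
Since $\omega\mapsto\sN_\omega$ is order-reversing by Lemma \ref{lemcomp}, it is enough to prove that these two von Neumann subalgebras of $\Linf(\GG)$ coincide.

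The inclusion $\sN_\omega\vee(\sN_\mu\cap\sN_\rho)\subset(\sN_\omega\vee\sN_\mu)\cap\sN_\rho$ is purely formal: hypothesis~(1) and Lemma \ref{lemcomp} give $\sN_\omega\subset\sN_\rho$, so $\sN_\omega$ sits inside both $\sN_\omega\vee\sN_\mu$ and $\sN_\rho$; the same is obvious for $\sN_\mu\cap\sN_\rho$; and $(\sN_\omega\vee\sN_\mu)\cap\sN_\rho$ is a von Neumann subalgebra, hence contains the join.

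For the reverse inclusion pick $x\in(\sN_\omega\vee\sN_\mu)\cap\sN_\rho=\sN_{\omega\wedge\mu}\cap\sN_\rho$. By hypothesis~(3) there is a net $(x_\lambda)$ of finite sums $x_\lambda=\sum_i y_i^\lambda z_i^\lambda$ with $y_i^\lambda\in\sN_\omega$ and $z_i^\lambda\in\sN_\mu$ converging $\sigma$-weakly to $x$. Normality of $E_\rho$ gives $E_\rho(x_\lambda)\to E_\rho(x)=x$ $\sigma$-weakly, and the $\sN_\rho$-bimodule property of $E_\rho$ (valid because $y_i^\lambda\in\sN_\omega\subset\sN_\rho$) yields
\[
E_\rho(x_\lambda)=\sum_i y_i^\lambda E_\rho(z_i^\lambda).
\]
Hypothesis~(2) is exactly what promotes each $E_\rho(z_i^\lambda)$ into $\sN_\mu\cap\sN_\rho$: by coassociativity $E_\rho\comp E_\mu$ is the convolution action of $\rho\staru\mu$, and the assumption $\rho\vee\mu=\rho\staru\mu$ says this action equals $E_{\rho\vee\mu}$; since $z_i^\lambda=E_\mu(z_i^\lambda)$ we obtain $E_\rho(z_i^\lambda)=E_{\rho\vee\mu}(z_i^\lambda)\in\sN_{\rho\vee\mu}=\sN_\mu\cap\sN_\rho$. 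Each $E_\rho(x_\lambda)$ therefore lies in $\sN_\omega\vee(\sN_\mu\cap\sN_\rho)$, and as this algebra is $\sigma$-weakly closed, so does the limit~$x$.

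The decisive ingredient is hypothesis~(2); without it one could only assert $E_\rho(z_i^\lambda)\in\sN_\rho$, which would trap $x$ in the a priori larger $\sN_\omega\vee\sN_\rho$. Hypothesis~(3) plays the auxiliary but essential role of producing the \emph{factored} approximation $\sum_i y_i^\lambda z_i^\lambda$ that makes the bimodule identity applicable; without a product-type description of $\sN_{\omega\wedge\mu}$ one could not separate the $\sN_\omega$- and $\sN_\mu$-variables under $E_\rho$.
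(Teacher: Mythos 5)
Your proposal is correct and follows essentially the same route as the paper's proof: the formal inclusion $\sN_\omega\vee(\sN_\mu\cap\sN_\rho)\subset(\sN_\omega\vee\sN_\mu)\cap\sN_\rho$ from $\rho\cleq\omega$, and the reverse inclusion obtained by applying the normal conditional expectation $E_\rho$ to the factored approximants supplied by hypothesis \eqref{modlaw3}, using the $\sN_\rho$-module property together with $E_\rho\comp E_\mu=E_{\rho\staru\mu}=E_{\rho\vee\mu}$ to place $E_\rho(z_i^\lambda)$ in $\sN_\mu\cap\sN_\rho$. This is exactly the paper's argument, merely phrased through $E_{\rho\vee\mu}$ rather than through the identity $E_\mu\comp E_\rho\comp E_\mu=E_\rho\comp E_\mu$.
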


Theorem \ref{modlaw} is modeled on \cite[Theorem 4.18]{alex}.

Before moving on to the proof of Theorem \ref{modlaw} let us note that the following conditions are in fact equivalent:
\begin{enumerate}[\indent(i)]
\item\label{muro1} $\rho\vee\mu=\rho\staru\mu$,
\item\label{muro2} $\mu\staru\rho\staru\mu=\rho\staru\mu$,
\item\label{muro3} $\mu\staru\rho=\rho\staru\mu$.
\end{enumerate}
Indeed, \eqref{muro1}$\Rightarrow$\eqref{muro2} follows, because $\rho\vee\mu$ dominates $\mu$, i.e.~$\mu\staru(\rho\vee\mu)=\rho\vee\mu$. Assume \eqref{muro2}. Then $P_\mu{P_\rho}P_\mu=P_\rho{P_\mu}$ and this implies that $P_\rho{P_\mu}=P_\mu{P_\rho}$, so we obtain \eqref{muro3}. Finally from \eqref{muro3} we get that $\rho\staru\mu$ is idempotent:
\[
(\rho\staru\mu)\staru(\rho\staru\mu)=\rho\staru\mu\staru\mu\staru\rho=\rho\staru\mu\staru\rho=\rho\staru\rho\staru\mu=\rho\staru\mu
\]
and it clearly dominates $\rho$ and $\mu$:
\[
\rho\staru(\rho\staru\mu)=\rho\staru\mu\quad\text{and}\quad\mu\staru(\rho\staru\mu)=\mu\staru\mu\staru\rho\staru=\mu\staru\rho=\rho\staru\mu.
\]
Moreover if $\lambda$ dominates $\rho$ and $\mu$ (i.e.~$\rho\staru\lambda=\lambda=\mu\staru\lambda$) then
\[
(\rho\staru\mu)\staru\lambda=\rho\staru(\mu\staru\lambda)=\rho\staru\lambda=\lambda,
\]
so $\rho\staru\mu\cleq\lambda$. It follows from Remark \ref{order} that $\rho\staru\mu=\rho\vee\mu$.

\begin{proof}[Proof of Theorem \ref{modlaw}]
The obvious relations $\omega\wedge\mu\cleq\omega$ and $\omega\wedge\mu\cleq\mu\cleq\rho\vee\mu$ imply
\[
\omega\wedge\mu\cleq\omega\wedge(\rho\vee\mu)
\]
(see Remark \ref{order}). Similarly $\rho\cleq\omega$ and $\rho\cleq\rho\wedge\mu$ imply
\[
\rho\cleq\omega\wedge(\rho\vee\mu).
\]
Therefore, once more by Remark \ref{order}, we obtain
\begin{equation}\label{jednastrona}
(\omega\wedge\mu)\vee\rho\cleq\omega\wedge(\rho\vee\mu).
\end{equation}

On the other hand, we know from the discussion preceding the proof that $E_{\mu\staru\rho\staru\mu}=E_{\rho\staru\mu}$ and hence $E_\mu\comp{E_\rho}\comp{E_\mu}=E_\rho\comp{E_\mu}$. It follows that $E_\rho(\sN_\mu)\subset\sN_\mu$, so
\begin{equation}\label{prodcont}
E_\rho(\sN_\mu)\subset\sN_\rho\cap\sN_\mu.
\end{equation}

Using this we will show the converse of \eqref{jednastrona}. Take $x\in(\sN_\omega\vee\sN_\mu)\cap\sN_\rho$. Then by assumption \eqref{modlaw3} $x$ is a $\sigma$-weak limit of the form
\[
x=\lim_{i\in{I}}\sum_{j=1}^{N_i}x_i^jy_i^j,
\]
with $x_i^j\in\sN_\omega$ and $y_i^j\in\sN_\mu$. Then, remembering that $E_\rho$ is a normal map, we compute
\[
x=E_\rho(x)=E_\rho\biggl(\lim_{i\in{I}}\sum_{j=1}^{N_i}x_i^jy_i^j\biggr)=\lim_{i\in{I}}\sum_{j=1}^{N_i}E_\rho(x_i^jy_i^j)=\lim_{i\in{I}}\sum_{j=1}^{N_i}x_i^jE_\rho(y_i^j)
\]
(note that $\sN_\omega\subset\sN_\rho$ by assumption \eqref{modlaw1} and hence the last passage above). Now all elements $x_i^jE_\rho(y_i^j)$ belong to $\sN_\omega\vee(\sN_\mu\cap\sN_\rho)$ by \eqref{prodcont}. It follows that
$x\in\sN_\omega\vee(\sN_\mu\cap\sN_\rho)$, which yields
\[
\omega\wedge(\rho\vee\mu)\cleq(\omega\wedge\mu)\vee\rho.
\]
\end{proof}

\begin{remark}
Condition \eqref{modlaw3} of Theorem \ref{modlaw} is satisfied e.g.~when $\omega$ or $\mu$ is of Haar type, see \cite[Remark 2.33]{alex}.
\end{remark}

\section{Open compact quasi-subgroups}\label{openSect}

Using the reducing morphism $\Lambda:\xymatrix@1{\C_0^\uu(\GG)\ar@{->>}[r]&\C_0(\GG)}\subset\Linf(\GG)$ we embed the space $\Linf(\GG)_*$ into $\C_0^\uu(\GG)^*$. It turns out that $\Linf(\GG)_*$ is a closed ideal of the Banach algebra $(\C_0^\uu(\GG)^*,\staru)$ (\cite[Proposition 5.3]{univ}). In this section we will consider idempotent states on $\GG$ which lie in this ideal. Denote
\[
\Idemn(\GG)=\Idem(\GG)\cap\Linf(\GG)_*.
\]
It is natural to refer to the compact quasi-subgroups corresponding to elements of $\Idemn(\GG)$ as \emph{open} compact quantum quasi-subgroups of $\GG$. In this context the next proposition generalizes the simple fact that if a subgroup of a topological group contains an open subgroup then it is itself open.

\begin{proposition}\label{opencleq}
For any $\omega,\mu\in\Idem(\GG)$  we have
\[
\left(
\begin{array}{c}
\omega\cleq\mu\\
\omega\in\Idemn(\GG)
\end{array}
\right)\;\Longrightarrow\;\Bigl(\mu\in\Idemn(\GG)\Bigr).
\]
\end{proposition}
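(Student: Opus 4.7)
The proof will be essentially a one-line observation, exploiting the ideal property of $\Linf(\GG)_*$ in the convolution algebra $\C_0^\uu(\GG)^*$, which is recalled at the start of Section \ref{openSect} (\cite[Proposition 5.3]{univ}). The definition of $\cleq$ gives $\omega\staru\mu=\mu$, so we simply rewrite $\mu$ as a convolution product of an element of the ideal with an arbitrary functional.

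More explicitly, the plan is as follows. First, I unfold the hypothesis $\omega\cleq\mu$ using the very definition given just before Lemma \ref{lemcomp}, obtaining the equality
\[
\mu=\omega\staru\mu.
\]
Then I invoke the fact that $\Linf(\GG)_*$, viewed via the embedding induced by the reducing morphism $\Lambda$, is a closed (two-sided) ideal in the Banach algebra $\bigl(\C_0^\uu(\GG)^*,\staru\bigr)$. Since $\omega\in\Idemn(\GG)\subset\Linf(\GG)_*$ and $\mu\in\C_0^\uu(\GG)^*$, the product $\omega\staru\mu$ lies in $\Linf(\GG)_*$. Combined with the equality above, this yields $\mu\in\Linf(\GG)_*$, and as $\mu$ is a state and idempotent we conclude $\mu\in\Idemn(\GG)$.

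There is no real obstacle here; the only point requiring attention is the convention on the order $\cleq$, which in this paper is \emph{opposite} to the natural order on Banach-algebra idempotents, so one has to be careful to read $\omega\cleq\mu$ as $\omega\staru\mu=\mu$ (and not the other way around). Once this is fixed, the argument is immediate from the ideal property, with no need to pass through the coideals $\sN_\omega,\sN_\mu$ or the group-like projections $P_\omega,P_\mu$.
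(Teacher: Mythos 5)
Your proof is correct and coincides with the paper's own argument: both deduce from $\omega\cleq\mu$ that $\mu=\omega\staru\mu$ and then apply the ideal property of $\Linf(\GG)_*$ in $\bigl(\C_0^\uu(\GG)^*,\staru\bigr)$. You also correctly handled the ordering convention, which is the only subtle point.
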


\begin{proof}
Recall that $\Linf(\GG)_*$ is an ideal in $\C_0^\uu(\GG)^*$. It follows that $\mu=\omega\staru\mu\in\Linf(\GG)_*$, so $\mu\in\Idemn(\GG)$.
\end{proof}

\begin{corollary}
If $\GG$ is a discrete quantum group then $\Idemn(\GG)=\Idem(\GG)$.
\end{corollary}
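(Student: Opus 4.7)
The strategy is to reduce the claim to Proposition \ref{opencleq} by exhibiting an element of $\Idemn(\GG)$ that is the smallest element of $(\Idem(\GG),\cleq)$. The natural candidate is the counit $\varepsilon$ of $\GG$, viewed as a character on $\C_0^\uu(\GG)$. From the counit axiom one immediately obtains $\varepsilon\staru\varepsilon=\varepsilon$ and $\varepsilon\staru\omega=\omega$ for every $\omega\in\C_0^\uu(\GG)^*$, so $\varepsilon\in\Idem(\GG)$ and $\varepsilon\cleq\omega$ for each $\omega\in\Idem(\GG)$. Thus $\varepsilon$ is universally dominated by every idempotent state; in the language of quasi-subgroups, it corresponds to the full quasi-subgroup $\GG$ itself.

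The main step is to verify $\varepsilon\in\Idemn(\GG)$ in the discrete case. For a discrete quantum group $\GG$, the C*-algebra $\C_0(\GG)$ decomposes as a $c_0$-direct sum of matrix algebras indexed by equivalence classes of irreducible unitary corepresentations of $\hh{\GG}$, with the trivial corepresentation contributing a one-dimensional summand isomorphic to $\mathbb{C}$. The counit on $\C_0(\GG)$ is the coordinate projection onto this summand; it extends to a normal $*$-character $\widetilde\varepsilon$ on $\Linf(\GG)$. By construction $\varepsilon=\widetilde\varepsilon\comp\Lambda$, so $\varepsilon\in\Idemn(\GG)$.

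Combining the two observations, Proposition \ref{opencleq} applied with $\omega=\varepsilon$ and arbitrary $\mu\in\Idem(\GG)$ yields $\mu\in\Idemn(\GG)$, which gives the desired equality $\Idem(\GG)=\Idemn(\GG)$. I do not anticipate any substantive obstacle: the only non-formal point is identifying the counit with a normal functional, which is a standard structural fact for discrete quantum groups; the rest is a direct invocation of the preceding proposition.
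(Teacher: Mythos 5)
Your proposal is correct and follows exactly the paper's argument: the counit of a discrete quantum group is normal, it is dominated by (i.e.\ $\cleq$) every idempotent state, and Proposition \ref{opencleq} then forces every idempotent state to be normal. The extra detail you supply about the $c_0$-direct-sum decomposition justifying normality of the counit is a correct elaboration of a fact the paper simply asserts.
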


\begin{proof}
The counit $\varepsilon$ of $\GG$ is normal. Consequently any $\omega\in\Idem(\GG)$ satisfies $\varepsilon\cleq\omega$, so $\omega\in\Idemn(\GG)$ by Proposition \ref{opencleq}.
\end{proof}

In other words any compact quasi-subgroup of a discrete quantum group is open (cf.~\cite{DKSS,open}).

For each $\omega\in\Idemn(\GG)$ consider its left kernel
\[
\sJ_\omega=\bigl\{x\in\Linf(\GG)\st\omega(x^*x)=0\bigr\}.
\]
It is well known that $\sJ_\omega$ is a $\sigma$-weakly closed left ideal in $\Linf(\GG)$ and hence it is of the form $\Linf(\GG)Q_\omega$ for a unique projection $Q_\omega\in\Linf(\GG)$. Clearly we have $\omega(x^*x)=0$ if and only if $xQ_\omega=x$. In particular $\omega(Q_\omega)=0$ and by Schwarz inequality $\omega(yQ_\omega)=0=\omega(Q_\omega{y})$ for all $y\in\Linf(\GG)$. Moreover, putting $Q_\omega^\perp=\I-Q_\omega$ we get
\begin{equation}\label{yQomega}
\omega(y)=\omega(yQ_\omega^\perp)=\omega(Q_\omega^\perp{y})=\omega(Q_\omega^\perp{y}Q_\omega^\perp),\qqquad{y}\in\Linf(\GG).
\end{equation}

\begin{remark}\label{remJ}
Let us note that if $x\in\Linf(\GG)$ is positive and $\omega(x)=0$ then $Q_\omega^\perp{x}=xQ_\omega^\perp=0$. Indeed, $x=y^*y$ for some $y\in\sJ_\omega=\Linf(\GG)Q_\omega$, so that $y=zQ_\omega$ for some $z$. Consequently $x=Q_\omega{z^*z}Q_\omega$.
\end{remark}

\begin{lemma}\label{L}
Let $\omega\in\Idemn(\GG)$. Then
\begin{enumerate}[\indent\rm(1)]
\item\label{L1} $x\in\sN_\omega$ if and only if $\Delta(x)(\I\tens{Q_\omega^\perp})=x\tens{Q_\omega^\perp}$,
\item\label{L2} $\Delta(Q_\omega)(Q_\omega^\perp\tens{Q_\omega^\perp})=0$,
\item\label{L3} $R(Q_\omega)=Q_\omega$ and $\tau_t(Q_\omega)=Q_\omega$ for all $t\in\RR$.
\end{enumerate}
\end{lemma}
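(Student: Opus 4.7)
The plan is to establish all three parts by a single strategy: in each case I construct a suitable positive element, show that $\omega$ (or $\omega\otimes\omega$) annihilates it, and then invoke Remark \ref{remJ} to conclude that the element itself vanishes.

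For part \eqref{L1}, I would set $y:=\Delta(x)(\I\otimes Q_\omega^\perp)-x\otimes Q_\omega^\perp$, which by construction satisfies $y=y(\I\otimes Q_\omega^\perp)$. Assuming $x\in\sN_\omega$, so that $x$, $x^*$ and $x^*x$ are all fixed by $E_\omega$, I expand $y^*y$ into four pieces, apply $\id\otimes\omega$, and use \eqref{yQomega} to strip off the $Q_\omega^\perp$ factors in the second leg of every slice; the four contributions then telescope to
\[
E_\omega(x^*x)-E_\omega(x^*)\,x-x^*E_\omega(x)+x^*x=0.
\]
For any positive $\phi\in\Linf(\GG)_*$ the element $(\phi\otimes\id)(y^*y)$ is positive, lies in $Q_\omega^\perp\Linf(\GG)Q_\omega^\perp$ (thanks to $y=y(\I\otimes Q_\omega^\perp)$), and is annihilated by $\omega$; Remark \ref{remJ} forces it to vanish, so $y^*y=0$ and hence $y=0$. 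The converse is an immediate one-line slicing of the stated equation by $\id\otimes\omega$, again via \eqref{yQomega}.

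The same template handles part \eqref{L2}: I set $c:=(Q_\omega^\perp\otimes Q_\omega^\perp)\Delta(Q_\omega)(Q_\omega^\perp\otimes Q_\omega^\perp)\geq 0$, apply \eqref{yQomega} in both legs together with the idempotence $(\omega\otimes\omega)\Delta=\omega$ and $\omega(Q_\omega)=0$ to obtain $(\omega\otimes\omega)(c)=0$, and observe that $\omega\otimes\omega$ is a normal state on $\Linf(\GG)\vtens\Linf(\GG)$ whose support projection is $Q_\omega^\perp\otimes Q_\omega^\perp$; Remark \ref{remJ} in the product algebra then forces $c=0$. Setting $b:=\Delta(Q_\omega)^{1/2}(Q_\omega^\perp\otimes Q_\omega^\perp)$ yields $b^*b=c=0$, so $b=0$, and left-multiplying by $\Delta(Q_\omega)^{1/2}$ gives the claim.

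For part \eqref{L3}, the $\tau$-invariance is the cleaner half: the intertwining relation $(\tau_t\otimes\tau_t)\Delta=\Delta\tau_t$ gives $E_{\omega\circ\tau_t}(x)=\tau_{-t}(E_\omega(\tau_t(x)))$, so $x\in\sN_{\omega\circ\tau_t}$ iff $\tau_t(x)\in\sN_\omega$; combined with $\tau$-invariance of $\sN_\omega$ this yields $\sN_{\omega\circ\tau_t}=\sN_\omega$, and the injective correspondence recalled after \eqref{eq1} forces $\omega\circ\tau_t=\omega$, from which $\tau_t(Q_\omega)=Q_\omega$ follows by comparing support projections. The $R$-invariance is the main technical obstacle, and I would handle it via the dual: the standard intertwining between $\wW$, $\hh{R}$ and the universal antipode gives $\hh{R}(P_\omega)=P_{\omega\circ R}$, reducing matters to showing $\hh{R}(P_\omega)=P_\omega$. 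This I would extract by applying $\hh{R}\otimes\hh{R}$ to the defining equation $\hh{\Delta}(P_\omega)(\I\otimes P_\omega)=P_\omega\otimes P_\omega$ (antimultiplicativity together with $(\hh{R}\otimes\hh{R})\hh{\Delta}=\sigma\hh{\Delta}\hh{R}$ producing a ``mirror'' group-like identity for $\hh{R}(P_\omega)$) and then combining commutativity of $\hh{R}$ and $\hh{\tau}$ with the uniqueness built into the correspondence between idempotent states and $\hh{\tau}$-invariant group-like projections to conclude $\hh{R}(P_\omega)=P_\omega$. Once $\omega\circ R=\omega$ is in hand, $R(Q_\omega)$ and $Q_\omega$ are the support complements of the same state, and therefore coincide.
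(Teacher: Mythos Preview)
Your arguments for parts \eqref{L1} and \eqref{L2} are correct and follow essentially the same route as the paper: compute that $\omega$ (respectively $\omega\otimes\omega$) annihilates a positive element, then use Remark~\ref{remJ} to peel off $Q_\omega^\perp$. Your version of \eqref{L2}, working directly with the support projection $Q_\omega^\perp\otimes Q_\omega^\perp$ of $\omega\otimes\omega$, is in fact a little cleaner than the paper's two-step slicing.

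The gap is in part \eqref{L3}, specifically the $R$-invariance. Your plan is: (i) note that $\hh{R}(P_\omega)=P_{\omega\circ R}$; (ii) show that $\hh{R}(P_\omega)$ is again a $\hh{\tau}$-invariant group-like projection; (iii) invoke ``uniqueness'' in the correspondence between idempotent states and $\hh{\tau}$-invariant group-like projections to conclude $\hh{R}(P_\omega)=P_\omega$. Step (ii) is correct, but step (iii) does not follow. The correspondence is a \emph{bijection}, not a statement that there is only one such projection: all you learn from (ii) is that $\hh{R}(P_\omega)$ corresponds to \emph{some} idempotent state, and (i) already told you that state is $\omega\circ R$. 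Nothing in your argument forces $\omega\circ R=\omega$; you have gone in a circle. (A related circularity risk lurks in your $\tau$-argument: the $\tau$-invariance of $\sN_\omega$ you invoke is, in the sources cited in Section~\ref{IdempSt}, typically \emph{derived from} $\omega\circ\tau_t=\omega$ rather than proved independently of it.)

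The paper does not attempt to reprove these invariances: it simply quotes \cite[Proposition~4]{SaSk}, which establishes $\omega\circ R=\omega=\omega\circ\tau_t$ for any idempotent state $\omega$ by a direct argument (ultimately using that the convolution square of a state coincides with itself forces compatibility with the antipode). Once that is in hand, $R(Q_\omega)=Q_\omega$ and $\tau_t(Q_\omega)=Q_\omega$ follow immediately by comparing left kernels, exactly as you say in your final sentence.
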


\begin{proof}
Ad \eqref{L1}. Assume first that $\Delta(x)(\I\tens{Q_\omega^\perp})=x\tens{Q_\omega^\perp}$. Slicing this equality with $\omega$ over the right leg and using \eqref{yQomega} we find that
\[
\omega\staru{x}=\omega(Q_\omega^\perp)x=x
\]
i.e.~$x\in\sN_\omega$.

Conversely, suppose that $x\in\sN_\omega$. Then $x^*x\in\sN_\omega$ and hence $(\id\tens\omega)\bigl(\Delta(x^*x)\bigr)=x^*x$. In particular
\[
\begin{split}
(\id\tens\omega)&\bigl((\Delta(x)-x\tens\I)^*(\Delta(x)-x\tens\I)\bigr)\\
&=(\id\tens\omega)\bigl(\Delta(x^*x)-\Delta(x^*)(x\tens\I)-(x^*\tens\I)\Delta(x)+x^*x\tens\I\bigr)\\
&=x^*x-(\omega\staru{x})^*x-x^*(\omega\staru{x})+x^*x=0.
\end{split}
\]
It follows that for all positive $\mu\in\Linf(\GG)_*$ we have
\[
\omega\Bigl((\mu\tens\id)\bigl((\Delta(x)-x\otimes\I)^*(\Delta(x)-x\otimes\I)\bigr)\Bigr)=0
\]
Denote the positive element $(\mu\tens\id)\bigl((\Delta(x)-x\otimes\I)^*(\Delta(x)-x\otimes\I)\bigr)$ by $x_\mu$. We have $\omega(x_\mu)=0$, so $x_\mu{Q_\omega^\perp}=0$ by Remark \ref{remJ} and consequently
\[
\begin{split}
0&={x_\mu}Q_\omega^\perp=(\mu\tens\id)\bigl((\Delta(x)-x\otimes\I)^*(\Delta(x)-x\otimes\I)\bigr)Q_\omega^\perp\\
&=(\mu\tens\id)\Bigl(\bigl((\Delta(x)-x\otimes\I)^*(\Delta(x)-x\otimes\I)\bigr)(\I\tens{Q_\omega^\perp})\Bigr).
\end{split}
\]
Since this is true for all positive $\mu\in\Linf(\GG)_*$, we get $\bigl(\Delta(x)-x\otimes\I\bigr)^*\bigl(\Delta(x)-x\otimes\I\bigr)(\I\tens{Q_\omega^\perp})=0$, and thus
\[
(\I\tens{Q_\omega^\perp})\bigl(\Delta(x)-x\otimes\I\bigr)^*\bigl(\Delta(x)-x\otimes\I\bigr)(\I\tens{Q_\omega^\perp})=0.
\]
Therefore $\bigl(\Delta(x)-x\otimes\I\bigr)(\I\tens{Q_\omega^\perp})=0$, i.e.~$\Delta(x)(\I\tens{Q_\omega^\perp})=x\tens{Q_\omega^\perp}$.

Ad \eqref{L2}. Since
\[
(\omega\tens\omega)\Delta(Q_\omega)=(\omega\staru\omega)(Q_\omega)=\omega(Q_\omega)=0,
\]
we find that $(\id\tens\omega)\Delta(Q_\omega)$ is a positive element of $\ker{\omega}$, so by Remark \ref{remJ} we have
\[
Q_\omega^\perp\bigl((\id\tens\omega)\Delta(Q_\omega)\bigr)Q_\omega^\perp=0.
\]

It follows that for any positive $\mu\in\Linf(\GG)_*$
\[
0=\mu\Bigl(Q_\omega^\perp\bigl((\id\tens\omega)\Delta(Q_\omega)\bigr)Q_\omega^\perp\Bigr)
=\omega\Bigl((\mu\tens\id)\bigl((Q_\omega^\perp\tens\I)\Delta(Q_\omega)(Q_\omega^\perp\tens\I)\bigr)\Bigr),
\]
so again by the initial remark
\[
Q_\omega^\perp
\Bigl((\mu\tens\id)\bigl((Q_\omega^\perp\tens\I)\Delta(Q_\omega)(Q_\omega^\perp\tens\I)\bigr)\Bigr)
Q_\omega^\perp=0
\]
or in other words
\[
(\mu\tens\id)\bigl((Q_\omega^\perp\tens{Q_\omega^\perp})\Delta(Q_\omega)(Q_\omega^\perp\tens{Q_\omega^\perp})\bigr)=0.
\]
Since this is true for all positive $\mu\in\Linf(\GG)_*$, we obtain
\[
(Q_\omega^\perp\tens{Q_\omega^\perp})\Delta(Q_\omega)(Q_\omega^\perp\tens{Q_\omega^\perp})=0,
\]
and it follows that $\Delta(Q_\omega)(Q_\omega^\perp\tens{Q_\omega^\perp})=0$ because $\Delta(Q_\omega)$ is a projection.

Ad \eqref{L3}. By \cite[Proposition 4]{SaSk} we have $\omega\comp{R}=\omega=\omega\comp\tau_t$ for all $t$. In particular  $x\in\sJ_\omega$ if and only if $\tau_t(x)\in\sJ_\omega$ for all $t$ and hence $x\in\sJ_\omega$ if and only if $x\tau_t(Q_\omega)=x$ for all $t$. In particular for any $t$ we obtain $\tau_t(Q_\omega)=Q_\omega$. Similarly $x\in\sJ_\omega$ if and only $R(x^*)\in\sJ_\omega$ if and only if $R(x^*)Q_\omega = R(x^*)$ if and only if $R(Q_\omega)x^*=x^*$ if and only $xR(Q_\omega)=x$. Thus $R(Q_\omega)=Q_\omega$.
\end{proof}

Recall that a projection $P\in\Linf(\GG)$ is called \emph{group-like} if $\Delta(P)(\I\tens{P})=P\tens{P}$.

\begin{theorem}\label{thglp}
Let $P\in\Linf(\GG)$ be a projection satisfying $\Delta(P)(P^\perp\tens{P^\perp})=0$, $P\in\Dom(S)$ and $S(P)=P$. Then $P^\perp$ is group-like.
\end{theorem}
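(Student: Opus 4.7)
The plan is to first rewrite the target identity in a more manageable form, then exploit $S(P)=P$ to produce a symmetry that allows finishing by a block decomposition of $\Delta(P)$.

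First I would reduce the goal. The group-like identity $\Delta(P^\perp)(\I\tens P^\perp)=P^\perp\tens P^\perp$ is equivalent, via $\Delta(P^\perp)=\I\tens\I-\Delta(P)$, to $\Delta(P)(\I\tens P^\perp)=P\tens P^\perp$. Splitting $\I=P+P^\perp$ in the first factor and using the hypothesis $\Delta(P)(P^\perp\tens P^\perp)=0$ to kill one term, the theorem reduces to the single projection inequality
\[
P\tens P^\perp\leq\Delta(P).
\]
The hypothesis and its adjoint already give $\Delta(P)\leq P\tens\I+P^\perp\tens P$, so $\Delta(P)$ lives in the corner algebra cut out by this projection, and one gets the block decomposition $\Delta(P)=\Delta(P)(P\tens\I)+\Delta(P)(P^\perp\tens P)$ (and its mirror from self-adjointness). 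The desired inequality says the diagonal block at $P\tens P^\perp$ is maximal.

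The key next step is to extract an $R$-symmetry from $S(P)=P$. Using the polar decomposition $S=R\circ\tau_{-i/2}$, the hypothesis yields $\tau_{-i/2}(P)=R(P)$. Since $R(P)$ is a self-adjoint projection and $\tau_{-i/2}(P)^*=\tau_{i/2}(P)$, this forces $\tau_{-i}(P)=P$. Because $P\in\Dom(\tau_{-i/2})$, the map $z\mapsto\tau_z(P)$ is analytic on a horizontal strip, bounded on $\RR$, and $i$-periodic; a standard analytic-continuation/uniqueness argument then upgrades the periodicity to $\tau_t(P)=P$ for all $t\in\RR$, and hence $R(P)=\tau_{-i/2}(P)=P$. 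Consequently $R(P^\perp)=P^\perp$ as well.

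With $R$-invariance in hand, I would apply $R\tens R$ to the block identity, using that $R\tens R$ is $*$-anti-multiplicative and $(R\tens R)\circ\Delta=\sigma\circ\Delta\circ R$. This produces a reflection that swaps the roles of the blocks $P\tens P^\perp$ and $P^\perp\tens P$ in $\Delta(P)$: concretely, $\sigma\circ(R\tens R)$ fixes $\Delta(P)$ and sends $P\tens P^\perp\leftrightarrow P^\perp\tens P$. Combined with the already-derived relation $(P^\perp\tens P^\perp)\Delta(P)=0$, the projection identity $\Delta(P)^2=\Delta(P)$, and the block decomposition above, this symmetry forces the off-diagonal blocks connecting $P\tens P^\perp$ with the other blocks to vanish and the diagonal block at $P\tens P^\perp$ to be the full projection, yielding $P\tens P^\perp\leq\Delta(P)$.

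Main obstacle: the technically most delicate step is the analytic-continuation argument that promotes $\tau_{-i}(P)=P$ to $\tau_t(P)=P$ for all real $t$, since $\tau_z$ is an unbounded one-parameter group for complex $z$ and $P$ is only a priori in the narrow domain of $\tau_{-i/2}$. I also anticipate that a bare symmetry argument from $R$-invariance may not be enough to transport information between all blocks; if so, I would supplement it with the Kac--Takesaki identity $(S\tens\id)(\ww)=\ww^*$ applied to the rewriting $\Delta(P)=\ww^*(\I\tens P)\ww$, using $S(P)=P$ to obtain an extra symmetry that closes the block argument.
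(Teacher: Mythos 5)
Your reduction is sound and in fact coincides with the paper's: given the hypothesis $\Delta(P)(P^\perp\tens{P^\perp})=0$, the group-like identity for $P^\perp$ is equivalent to $\Delta(P)(P\tens{P^\perp})=P\tens{P^\perp}$, i.e.\ to $\Delta(P^\perp)(P\tens{P^\perp})=0$. Your derivation of $\tau_t(P)=P$ and $R(P)=P$ from $S(P)=P$ (via $\tau_{-i}(P)=P$, periodic extension and Liouville) is also correct, though it is more than is needed: the only consequence of $S(P)=P$ that the argument ultimately uses is $S(P^\perp)=P^\perp$, which is immediate.

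The gap is in the final step, which is where the substance of the theorem lies. The ingredients you assemble --- $\Delta(P)$ is a projection, $\Delta(P)\leq P\tens\I+P^\perp\tens{P}$, and invariance of $\Delta(P)$ under the antiautomorphism $\Phi=(\text{flip})\comp(R\tens R)$, which exchanges $P\tens{P^\perp}$ and $P^\perp\tens{P}$ --- do not imply $P\tens{P^\perp}\leq\Delta(P)$. Abstractly, with $e_1=P\tens{P}$, $e_2=P\tens{P^\perp}$, $e_3=P^\perp\tens{P}$, a projection $Q\leq e_1+e_2+e_3$ satisfying $\Phi(Q)=Q$ may perfectly well have trivial $e_2$-corner (e.g.\ $Q=e_1$ meets all three constraints); moreover applying $\Phi$ to the hypothesis merely reproduces its adjoint $(P^\perp\tens{P^\perp})\Delta(P)=0$, so the symmetry transports no new information into the $e_2$ block. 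Your fallback via $(S\tens\id)(\ww)=\ww^*$ points in the right direction but is left unexecuted, and it is precisely the missing ingredient. What closes the argument in the paper is the strong right invariance of $\psi$: from the hypothesis one gets $(\psi\tens\id)\bigl(\Delta(a^*P)(P^\perp{b}\tens{P^\perp})\bigr)=0$ for all $a,b\in\Dom(\eta)$, and the identity $(\psi\tens\id)\bigl(\Delta(u^*)(v\tens\I)\bigr)=S\bigl((\psi\tens\id)((u^*\tens\I)\Delta(v))\bigr)$, combined with $S(P^\perp)=P^\perp$ and injectivity of $S$, turns this into $(\psi\tens\id)\bigl((a^*P\tens{P^\perp})\Delta(P^\perp{b})\bigr)=0$ for all $a,b$, whence $(P\tens{P^\perp})\Delta(P^\perp)=0$. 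Some such use of the Haar weight (or an explicitly worked-out pentagon/antipode computation with $\ww$) is unavoidable; a purely block-theoretic symmetry argument cannot succeed.
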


\begin{proof}
It suffices to show that $\Delta(P^\perp)(P\tens{P^\perp})=0$. Indeed the latter implies that
\[
\begin{split}
\Delta(P^\perp)(\I\tens{P^\perp})&=\Delta(P^\perp)\bigl((P+P^\perp)\tens{P^\perp}\bigr)\\
&=\Delta(P^\perp)(P\tens{P^\perp})+\Delta(P^\perp)(P^\perp\tens{P^\perp})\\
&=\Delta(P^\perp)(P\tens{P^\perp})+\Delta(\I-P)(P^\perp\tens{P^\perp})\\
&=\Delta(P^\perp)(P\tens{P^\perp})+(P^\perp\tens{P^\perp})-\Delta(P)(P^\perp\tens{P^\perp})=P^\perp\tens{P^\perp}
\end{split}
\]
because $\Delta(P)(P^\perp\tens{P^\perp})=0$ by assumption.

Obviously, as $\Delta(P)(P^\perp\tens{P^\perp})=0$, also for any $a,b\in\Dom(\eta)$
\[
(\psi\tens\id)\bigl(\Delta(a^*P)(P^\perp{b}\tens{P^\perp}\bigr)=0.
\]
or in other words
\[
(\psi\tens\id)\bigl(\Delta(a^*P)(P^\perp{b}\tens\I)\bigr)P^\perp=0.
\]
By the strong right invariance of $\psi$ (\cite[Definition 1.5(3)]{mnw}, cf.~also \cite[Proposition 5.24]{KV} and \cite[Definition 1.1(iv) b)]{mn}, but note different convention regarding the antipode) and Lemma \ref{L}\eqref{L3} the left hand side of the above equality is
\[
\begin{split}
S\Bigl((\psi\tens\id)\bigl((a^*P\tens\I)\Delta(P^\perp{b})\bigr)\Bigr)P^\perp
&=S\Bigl((\psi\tens\id)\bigl((a^*P\tens\I)\Delta(P^\perp{b})\bigr)\Bigr)S(P^\perp)\\
&=S\Bigl(P^\perp(\psi\tens\id)\bigl((a^*P\tens\I)\Delta(P^\perp{b})\bigr)\Bigr)\\
&=S\Bigl((\psi\tens\id)\bigl((a^*P\tens{P^\perp})\Delta(P^\perp{b})\bigr)\Bigr).
\end{split}
\]
Hence we obtain
\[
(\psi\tens\id)\bigl((a^*P\tens{P^\perp})\Delta(P^\perp{b})\bigr)=0,\qqquad{a,b}\in\Dom(\eta).
\]
i.e.
\[
(\psi\tens\id)\bigl((a^*\tens\I)(P\tens{P^\perp})\Delta(P^\perp)\Delta(b)\bigr)=0,\qqquad{a,b}\in\Dom(\eta).
\]
which implies $(P\tens{P^\perp})\Delta(P^\perp)=0$, so also $\Delta(P^\perp)(P\tens{P^\perp})=0$.
\end{proof}

\begin{corollary}
Let $\omega\in\Idemn(\GG)$. Then $Q_\omega^\perp$ is a group-like projection.
\end{corollary}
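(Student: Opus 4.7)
The plan is simply to verify that $P = Q_\omega$ satisfies the hypotheses of Theorem \ref{thglp}, and then invoke that theorem to conclude that $P^\perp = Q_\omega^\perp$ is group-like.

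First I would note that the condition $\Delta(Q_\omega)(Q_\omega^\perp \tens Q_\omega^\perp) = 0$ is exactly Lemma \ref{L}\eqref{L2}, so one hypothesis of Theorem \ref{thglp} is immediate. It remains to check that $Q_\omega \in \Dom(S)$ and that $S(Q_\omega) = Q_\omega$.

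This is where Lemma \ref{L}\eqref{L3} does all the work. The polar decomposition of the antipode gives $S = R \comp \tau_{-i/2}$, so $\Dom(S)$ coincides with the domain of $\tau_{-i/2}$. Since $\tau_t(Q_\omega) = Q_\omega$ for every real $t$, the element $Q_\omega$ is entire analytic for the scaling group, with $\tau_z(Q_\omega) = Q_\omega$ for all $z \in \mathbb{C}$; in particular $Q_\omega \in \Dom(\tau_{-i/2})$ and $\tau_{-i/2}(Q_\omega) = Q_\omega$. Therefore $Q_\omega \in \Dom(S)$ and
\[
S(Q_\omega) = R\bigl(\tau_{-i/2}(Q_\omega)\bigr) = R(Q_\omega) = Q_\omega,
\]
using once more Lemma \ref{L}\eqref{L3} for the last equality.

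With both hypotheses of Theorem \ref{thglp} verified, that theorem immediately yields that $Q_\omega^\perp$ is a group-like projection in $\Linf(\GG)$. There is no serious obstacle here: the whole point of the preparatory Lemma \ref{L} was to set up precisely the two properties of $Q_\omega$ that Theorem \ref{thglp} consumes, so the corollary is a one-line application.
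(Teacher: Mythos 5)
Your proof is correct and is essentially the paper's own argument: apply Theorem \ref{thglp} with $P=Q_\omega$, using Lemma \ref{L}\eqref{L2} for the hypothesis $\Delta(P)(P^\perp\tens P^\perp)=0$ and Lemma \ref{L}\eqref{L3} for $P\in\Dom(S)$ and $S(P)=P$ via $S=R\comp\tau_{-i/2}$. You have merely spelled out the standard step (pointwise $\tau$-invariance forces analyticity and $\tau_z(Q_\omega)=Q_\omega$ for all complex $z$) that the paper leaves implicit.
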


\begin{proof}
Statements \eqref{L3} and \eqref{L2} of Lemma \ref{L} show that the projection $P=Q_\omega^\perp$ satisfies the assumptions of Theorem \ref{thglp} and consequently $Q_\omega^\perp$ is group-like.
\end{proof}

\begin{remark}\label{sigmaRem}
We have established that $Q_\omega^\perp$ is a group-like projection invariant under $R$ and $\tau$. Therefore, by \cite[Theorem 4.3 and Theorem 3.1]{FK}, $Q_\omega^\perp$ is invariant under the modular group $\sigma^\psi$ of $\psi$:
\[
\sigma^\psi_t(Q_\omega^\perp)=Q_\omega^\perp,\qqquad{t}\in\RR.
\]
\end{remark}

\begin{theorem}\label{Qmin}
For any $\omega\in\Idemn(\GG)$ the projection $Q_\omega^\perp$ belongs to $\sN_\omega$. Moreover $Q_\omega^\perp$ is minimal and central in $\sN_\omega$ and $Q_\omega^\perp\in\C_0(\GG)\cap\Dom(\eta)$.
\end{theorem}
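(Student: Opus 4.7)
My plan has four parts, corresponding to the four assertions of the theorem.

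\textbf{Membership.} The claim $Q_\omega^\perp\in\sN_\omega$ is immediate from the preceding corollary combined with Lemma~\ref{L}\eqref{L1}: group-likeness says $\Delta(Q_\omega^\perp)(\I\tens Q_\omega^\perp)=Q_\omega^\perp\tens Q_\omega^\perp$, which is exactly the condition characterizing membership in $\sN_\omega$.

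\textbf{Minimality and centrality.} The core of the argument is to establish the single identity $yQ_\omega^\perp=\omega(y)Q_\omega^\perp$ for every $y\in\sN_\omega$, since combined with its adjoint $Q_\omega^\perp y=\omega(y)Q_\omega^\perp$ it yields both centrality ($yQ_\omega^\perp=Q_\omega^\perp y$) and minimality ($Q_\omega^\perp\sN_\omega Q_\omega^\perp=\mathbb{C}Q_\omega^\perp$). To this end, for arbitrary $x\in\sN_\omega$ and any $\upomega\in\Linf(\GG)_*$, I would slice the defining equality $\Delta(x)(\I\tens Q_\omega^\perp)=x\tens Q_\omega^\perp$ on the first leg to obtain
\[
\bigl((\upomega\tens\id)\Delta(x)\bigr)\,Q_\omega^\perp = \upomega(x)\,Q_\omega^\perp.
\]
Setting $z=(\upomega\tens\id)\Delta(x)$, two observations are crucial. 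First, the left-coideal property $\Delta(\sN_\omega)\subset\Linf(\GG)\vtens\sN_\omega$ places $z\in\sN_\omega$. Second, since $E_\omega(x)=x$ for $x\in\sN_\omega$,
\[
\omega(z)=(\upomega\tens\omega)\Delta(x)=\upomega\bigl(E_\omega(x)\bigr)=\upomega(x).
\]
Thus $zQ_\omega^\perp=\omega(z)Q_\omega^\perp$ holds for every $z$ in the linear span of $\{(\upomega\tens\id)\Delta(\sN_\omega):\upomega\in\Linf(\GG)_*\}$. Non-degeneracy of the restricted coaction $\Delta\vert_{\sN_\omega}$ renders this span $\sigma$-weakly dense in $\sN_\omega$; as multiplication by $Q_\omega^\perp$ and evaluation at the normal state $\omega$ are both $\sigma$-weakly continuous, the identity extends to every $y\in\sN_\omega$, and the adjoint follows since $\sN_\omega$ is a $*$-algebra.

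\textbf{$Q_\omega^\perp\in\Dom(\eta)\cap\C_0(\GG)$.} Centrality and minimality exhibit $\mathbb{C}Q_\omega^\perp$ as a one-dimensional direct summand of $\sN_\omega$. Semifiniteness of $\psi$ on this summand then forces $\psi(Q_\omega^\perp)<\infty$, so $Q_\omega^\perp\in\gN_\psi=\Dom(\eta)$. For membership in $\C_0(\GG)$, I would invoke the structural results of \cite{FK}: $Q_\omega^\perp$ is a $\tau$-, $R$- and $\sigma^\psi$-invariant group-like projection (Lemma~\ref{L}\eqref{L3} and Remark~\ref{sigmaRem}) which is $\psi$-integrable, and these properties suffice to place it in $\C_0(\GG)$ (alternatively, normality of $\omega$ can be combined with a Kac-Takesaki coefficient expression).

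The main obstacle is the centrality/minimality step: passing from the slice-wise identity (which only directly controls elements of the form $z=(\upomega\tens\id)\Delta(x)$) to all of $\sN_\omega$ requires both non-degeneracy of the restricted coaction and the fortuitous cancellation $\omega(z)=\upomega(x)$ afforded by $E_\omega$-invariance of $\sN_\omega$.
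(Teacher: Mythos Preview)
Your approach coincides with the paper's. Membership via Lemma~\ref{L}\eqref{L1}, minimality/centrality by slicing $\Delta(x)(\I\tens Q_\omega^\perp)=x\tens Q_\omega^\perp$ on the first leg and invoking $\sigma$-weak density of the slices in $\sN_\omega$ (the paper calls this the weak Podle\'s condition, citing \cite[Corollary~2.7]{proj}), and $\Dom(\eta)$-membership from integrability plus minimality --- all match. Your reformulation of the scalar as $\omega(z)$ is a pleasant extra observation but not a different route.

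The one point to tighten is the $\C_0(\GG)$ step. The results of \cite{FK} you cite yield $Q_\omega^\perp=(\mu\tens\id)(\Ww)$ for some idempotent state $\mu$ on $\hh{\GG}$, which only places $Q_\omega^\perp$ in $\M(\C_0(\GG))$, not in $\C_0(\GG)$ itself. The paper closes this gap with a second step: since $E_\omega(\C_0(\GG))\subset\sN_\omega\cap\C_0(\GG)$ is $\sigma$-weakly dense in $\sN_\omega$ and $Q_\omega^\perp$ is minimal, one can find $a\in\C_0(\GG)\cap\sN_\omega$ with $aQ_\omega^\perp\neq 0$, whence $Q_\omega^\perp$ is a scalar multiple of $aQ_\omega^\perp\in\C_0(\GG)\,\M(\C_0(\GG))=\C_0(\GG)$. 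Your sketch elides this passage from the multiplier algebra to $\C_0(\GG)$.
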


\begin{remark}
Note that if $\GG$ is a locally compact quantum group and $Q$ a non-zero positive element in $\Dom(\eta)$ invariant under $\tau$ then the scaling constant $\lambda$ of $\GG$ must be equal to one. In particular it follows from Lemma \ref{L}\eqref{L3} and the fact that $Q_\omega^\perp$ is (square-) integrable that if $\Idemn(\GG)\neq\emptyset$ then $\lambda=1$. In particular the quantum ``$az+b$'' groups (\cite{azb,nazb}) do not admit normal idempotent states.
\end{remark}

\begin{remark}\label{finsupp}
Another consequence of integrability of $Q_\omega^\perp$ is that any idempotent state on a discrete quantum group $\GG$ has ``finite support'' in the sense that it is zero on almost all direct summands of the algebra
\[
\M\bigl(\operatorname{c}_0(\GG)\bigr)=\bigoplus_{\alpha}M_{n_\alpha}
\]
(this is a $\ell^\infty$-direct sum, cf.~\cite[Section 2]{qlor}). Indeed, the projection $Q_\omega^\perp$ must have non-zero components in only finitely many summands due to the formula for $\psi$ (\cite[Equation (2.13)]{qlor}) which gives $\psi(Q_\omega^\perp)$ as a sum of traces of components of $Q_\omega^\perp$ multiplied by appropriate quantum dimensions which are all greater than or equal to $1$.
\end{remark}

\begin{proof}[Proof of Theorem \ref{Qmin}]
Once we know that $Q_\omega^\perp$ is a group-like projection, we see immediately by Lemma \ref{L}\eqref{L1} that $Q_\omega^\perp\in\sN_\omega$.

Now take arbitrary $x\in\sN_\omega$. Then, again by Lemma \ref{L}\eqref{L1}, we have
\[
\Delta(x)(\I\tens{Q_\omega^\perp})=x\tens{Q_\omega^\perp}\quad\text{and}\quad(\I\tens{Q_\omega^\perp})\Delta(x)=x\tens{Q_\omega^\perp}.
\]
(with the latter equality obtained from the former applied to $x^*$). It follows that for any $\mu\in\Linf(\GG)_*$ we have
\[
(\mu\tens\id)\bigl(\Delta(x)\bigr)Q_\omega^\perp=\mu(x)Q_\omega^\perp=Q_\omega^\perp(\mu\tens\id)\bigl(\Delta(x)\bigr).
\]
Recall now that by the weak form of Podle\'s condition (\cite[Corollary 2.7]{proj}) elements of the form $(\mu\tens\id)\bigl(\Delta(x)\bigr)$ span a $\sigma$-weakly dense subset of $\sN_\omega$, and consequently $Q_\omega^\perp$ is a minimal and central projection in $\sN_\omega$.

Now $\sN_\omega$ is also integrable, so the set of elements of $\sN_\omega$ which are square-integrable with respect to the right Haar measure is $\sigma$-weakly dense in $\sN_\omega$. In particular there exists square-integrable element $x\in\sN_\omega$ such that $Q_\omega^\perp{x}\neq{0}$. The set $\Dom(\eta)\cap\sN_\omega$ is also a left ideal $\in\sN$ so $Q_\omega^\perp{x}\in\Dom(\eta)$. This element is at the same time proportional to $Q_\omega^\perp$ (by its minimality) and non-zero. It follows that $Q_\omega^\perp\in\Dom(\eta)$.

Since $Q_\omega^\perp$ is a group-like projection preserved by the scaling group, by the results of \cite{FK} mentioned in Section \ref{IdempSt}, there exists an idempotent state $\mu$ on $\hh{\GG}$ such that $Q_\omega^\perp= (\mu\tens\id)(\Ww)$. Thus $Q_\omega^\perp\in\M(\C_0(\GG))$. Again, since $Q_\omega^\perp$ is a minimal projection in $\sN_\omega$ and $E_\omega\bigl(\C_0(\GG)\bigr)\subset\sN_\omega\cap\C_0(\GG)$ is a $\sigma$-weakly dense subset of $\sN_\omega$, there must exist $a\in\C_0(\GG)\cap\sN_\omega$ such that $Q_\omega^\perp=aQ_\omega^\perp$ and the latter element belongs to $\C_0(\GG)\M\bigl(\C_0(\GG)\bigr)=\C_0(\GG)$.
\end{proof}

\begin{theorem}\label{wzor}
For any $\omega\in\Idemn(\GG)$ we have
\[
\omega(x)=\frac{\psi(Q_\omega^\perp\;\!x\;\!Q_\omega^\perp)}{\psi(Q_\omega^\perp)},\qqquad{x}\in\Linf(\GG).
\]
\end{theorem}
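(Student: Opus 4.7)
The plan rests on three facts from earlier in the paper: the minimality and centrality of $Q_\omega^\perp$ inside $\sN_\omega$ (Theorem \ref{Qmin}), which together give $\sN_\omega Q_\omega^\perp = \mathbb{C} Q_\omega^\perp$; the bimodule property of the conditional expectation $E_\omega$ over $\sN_\omega$; and the integrability $Q_\omega^\perp \in \sN_\omega\cap\Dom(\eta)$, which, together with \eqref{eq1}, places $\eta(Q_\omega^\perp)$ inside $\Ltwo(\sN_\omega)$, the range of $P_\omega$.

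The first step is to establish the pointwise identity
\[
E_\omega(x)\;\!Q_\omega^\perp = \omega(x)\;\!Q_\omega^\perp,\qqquad{x}\in\Linf(\GG).
\]
Since $E_\omega(x) \in \sN_\omega$ and $\sN_\omega Q_\omega^\perp = \mathbb{C} Q_\omega^\perp$, one has $E_\omega(x)Q_\omega^\perp = c(x) Q_\omega^\perp$ for a unique scalar $c(x)$. Applying $\omega$, using $\omega(Q_\omega^\perp) = 1 - \omega(Q_\omega) = 1$, formula \eqref{yQomega}, and the identity $\omega\comp E_\omega = \omega\staru\omega = \omega$ coming directly from idempotence, one reads off $c(x) = \omega(x)$. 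Evaluating $\psi$ on both sides then gives $\psi\bigl(E_\omega(x) Q_\omega^\perp\bigr) = \omega(x)\,\psi(Q_\omega^\perp)$, which is precisely $\psi(Q_\omega^\perp)$ times the right-hand side of the claimed formula.

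It remains to show $\psi(Q_\omega^\perp x Q_\omega^\perp) = \psi\bigl(E_\omega(x) Q_\omega^\perp\bigr)$. Setting $z := x - E_\omega(x) \in \ker E_\omega$, this is equivalent to $\psi(Q_\omega^\perp z Q_\omega^\perp) = 0$, which I translate into a GNS orthogonality statement: since $Q_\omega^\perp \in \Dom(\eta)$,
\[
\psi(Q_\omega^\perp z Q_\omega^\perp) = \bigl\langle \eta(Q_\omega^\perp)\,\big|\,\eta(z Q_\omega^\perp) \bigr\rangle.
\]
The left-hand vector sits in $\Ltwo(\sN_\omega)$. For the right-hand vector, the bimodule property of $E_\omega$ together with $Q_\omega^\perp \in \sN_\omega$ yields $E_\omega(zQ_\omega^\perp) = E_\omega(z)Q_\omega^\perp = 0$, whence by \eqref{eq1} $P_\omega\eta(zQ_\omega^\perp) = 0$, i.e.~$\eta(zQ_\omega^\perp)$ lies in $\Ltwo(\sN_\omega)^\perp$. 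The inner product therefore vanishes and the formula follows.

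I do not foresee a serious obstacle: the argument is essentially a compressed-corner calculation in which one-dimensionality of $\sN_\omega Q_\omega^\perp$ pins the constant $c(x)$ down to $\omega(x)$, while the bimodule property of $E_\omega$ converts membership in $\ker E_\omega$ into orthogonality to $\Ltwo(\sN_\omega)$, the very subspace housing $\eta(Q_\omega^\perp)$. The only mildly delicate point is using \eqref{yQomega} together with centrality to justify $\omega\bigl(E_\omega(x)Q_\omega^\perp\bigr) = \omega\bigl(E_\omega(x)\bigr)$, but this is immediate.
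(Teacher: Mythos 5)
Your proof is correct, and it takes a genuinely different route from the one in the paper. The paper proceeds indirectly: it defines $\theta(x)=\psi(Q_\omega^\perp\,x\,Q_\omega^\perp)/\psi(Q_\omega^\perp)$, verifies by a direct computation (using the group-like property of $Q_\omega^\perp$, its $R$-invariance and the right invariance of $\psi$) that $\theta$ is a normal idempotent state with $Q_\theta=Q_\omega$, hence $\sN_\theta=\sN_\omega$ by Lemma \ref{L}\eqref{L1}, and then concludes $\theta=\omega$ by appealing to the bijective correspondence between idempotent states and integrable $\tau$-invariant coideals recalled in Section \ref{IdempSt}. You instead verify the formula pointwise: minimality and centrality of $Q_\omega^\perp$ in $\sN_\omega$ (Theorem \ref{Qmin}) force $E_\omega(x)Q_\omega^\perp=\omega(x)Q_\omega^\perp$ (the normalization $\omega(Q_\omega^\perp)=1$, \eqref{yQomega} and $\omega\comp E_\omega=\omega\staru\omega=\omega$ identify the scalar), while the complementary piece $z=x-E_\omega(x)$ dies because $E_\omega(zQ_\omega^\perp)=E_\omega(z)Q_\omega^\perp=0$ translates via \eqref{eq1} into $\eta(zQ_\omega^\perp)\perp\Ltwo(\sN_\omega)\ni\eta(Q_\omega^\perp)$. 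All the ingredients you use ($Q_\omega^\perp\in\sN_\omega\cap\Dom(\eta)$, minimality, centrality, the module property of the conditional expectation, and the fact that $\gN_\psi$ is a left ideal so that $zQ_\omega^\perp\in\Dom(\eta)$) are available at this point in the paper. What your approach buys is independence from the classification machinery of \cite{KK,FK}: you never need to know that a coideal or a group-like projection determines its idempotent state uniquely, only the structural facts about $Q_\omega^\perp$ already proved. What the paper's approach buys is that the same computation (idempotence of the compressed state attached to a group-like projection) is reused almost verbatim in Theorem \ref{thmcod} and later results, so it earns its keep elsewhere.
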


\begin{proof}
Define a functional $\theta$ on $\Linf(\GG)$ by
\[
\theta(x)=\frac{\psi(Q_\omega^\perp\;\!x\;\!Q_\omega^\perp)}{\psi(Q_\omega^\perp)},\qqquad{x}\in\Linf(\GG).
\]
Clearly $\theta$ is a normal state. Moreover $\theta$ is idempotent because
{\allowdisplaybreaks
\begin{align*}
(\theta\staru\theta)(x)&=(\theta\tens\theta)\bigl(\Delta(x)\bigr)\\
&=\tfrac{1}{\psi(Q_\omega^\perp)^2}(\psi\tens\psi)\bigl((Q_\omega^\perp\tens{Q_\omega^\perp})\Delta(x)(Q_\omega^\perp\tens{Q_\omega^\perp})\bigr)\\
&=\tfrac{1}{\psi(Q_\omega^\perp)^2}(\psi\tens\psi)\bigl((\I\tens{Q_\omega^\perp})\Delta(Q_\omega^\perp)\Delta(x)\Delta(Q_\omega^\perp)(Q_\omega^\perp\tens\I)\bigr)\\
&=\tfrac{1}{\psi(Q_\omega^\perp)^2}(\psi\tens\psi)\bigl((\I\tens{Q_\omega^\perp})\Delta(Q_\omega^\perp\;\!x\;\!Q_\omega^\perp)(Q_\omega^\perp\tens\I)\bigr)\\
&=\tfrac{1}{\psi(Q_\omega^\perp)^2}\psi\Bigl((\psi\tens\id)\bigl((\I\tens{Q_\omega^\perp})\Delta(Q_\omega^\perp\;\!x\;\!Q_\omega^\perp)(Q_\omega^\perp\tens\I)\bigr)\Bigr)\\
&=\tfrac{1}{\psi(Q_\omega^\perp)^2}\psi\Bigl(Q_\omega^\perp(\psi\tens\id)\bigl(\Delta(Q_\omega^\perp\;\!x\;\!Q_\omega^\perp)\bigr)Q_\omega^\perp\Bigr)\\
&=\tfrac{1}{\psi(Q_\omega^\perp)^2}\psi\bigl(Q_\omega^\perp\psi(Q_\omega^\perp\;\!x\;\!Q_\omega^\perp)Q_\omega^\perp\bigr)\\
&=\tfrac{\psi(Q_\omega^\perp\;\!x\;\!Q_\omega^\perp)}{\psi(Q_\omega^\perp)^2}\psi(Q_\omega^\perp)=\theta(x)
\end{align*}
}(where in the third equality we used the fact that $Q_\omega^\perp$ is a group-like projection invariant under $R$ and the seventh equality uses right invariance of $\psi$). Moreover $Q_\theta=Q_\omega$. Now, as $Q_\theta$ determines $\sN_\theta$ (cf.~Lemma \ref{L}\eqref{L1}), we also have $\sN_\theta=\sN_\omega$, so $\theta=\omega$ by the results gathered in Section \ref{IdempSt}.
\end{proof}

The next theorem uses co-duality for coideals defined e.g.~in \cite[Section 3]{eqhs}: given a left coideal $\sN\subset\Linf(\GG)$ the relative commutant
\[
\dd{\sN}=\sN'\cap\Linf(\hh{\GG})
\]
is a left coideal in $\Linf(\hh{\GG})$ called the \emph{co-dual} of $\sN$. We have $\dd{\dd{\sN}}=\sN$.

We will also use the following notion: a functional $\theta$ on $\Linf(\GG)$ is called \emph{$\Ltwo$-bounded} if there exists $\xi\in\Ltwo(\GG)$ such that
\[
\theta(x)=\is{\xi}{\eta(x)},\qqquad{x}\in\Dom(\eta).
\]
By \cite[Proposition 6.5(3)]{mnw} if $\theta$ and $\theta^*\comp{S}$ are $\Ltwo$-bounded then $(\id\tens\theta)(\ww)$ is square-integrable with respect to $\hh{\psi}$. Consider now a normal idempotent state $\omega$ on $\Linf(\GG)$ and take $x\in\Dom(\eta)$. By Theorem \ref{wzor} and the fact that $Q_\omega^\perp$ is $\sigma^\psi$-invariant (Remark \ref{sigmaRem}) we have
\[
\omega(x)=\frac{\psi(Q_\omega^\perp\;\!x\;\!Q_\omega^\perp)}{\psi(Q_\omega^\perp)}=\frac{\psi(Q_\omega^\perp\:\!x)}{\psi(Q_\omega^\perp)}=\is{\xi}{\eta(x)},
\]
for $\xi=\tfrac{1}{\psi(Q_\omega^\perp)}\eta(Q_\omega^\perp)$. It follows that $\omega$ is $\Ltwo$-bounded. Moreover $\omega^*\comp{S}=\omega\comp{S}=\omega$ by Lemma \ref{L}\eqref{L3}. Thus
\[
P_\omega=(\id\tens\omega)(\ww)
\]
is square-integrable with respect to $\hh{\psi}$.

\begin{theorem}\label{thmcod}
Let $\omega\in\Idemn(\GG)$. Then there exists a unique $\dd{\omega}\in\Idemn(\hh{\GG})$ such that $\dd{\sN_\omega}=\sN_{\dd{\omega}}$. Moreover $(\dd{\omega}\tens\id)(\ww)=Q_\omega^\perp$ and $\dd{\dd{\omega}}=\omega$.
\end{theorem}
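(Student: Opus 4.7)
Since $Q_\omega^\perp$ is, by Lemma \ref{L}\eqref{L3} and the corollary following Theorem \ref{thglp}, a $\tau$-invariant group-like projection in $\Linf(\GG)=\Linf(\hh{\hh{\GG}})$, the bijective correspondence of Section \ref{IdempSt} applied to the dual quantum group $\hh{\GG}$ produces a unique $\dd{\omega}\in\Idem(\hh{\GG})$ whose associated group-like projection is $Q_\omega^\perp$; with the conventions of the paper this is exactly the relation $(\dd{\omega}\tens\id)(\ww)=Q_\omega^\perp$. Uniqueness of $\dd{\omega}$ with the stated coideal property will then follow from this bijection once $\sN_{\dd{\omega}}=\dd{\sN_\omega}$ is established. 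The remaining tasks are thus to prove $\dd{\omega}\in\Idemn(\hh{\GG})$, to identify $\sN_{\dd{\omega}}$ with $\dd{\sN_\omega}$, and to verify $\dd{\dd{\omega}}=\omega$.

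For normality I would produce $\dd{\omega}$ explicitly via a formula dual to Theorem \ref{wzor}. Using that $P_\omega$ is $\hh{\psi}$-square-integrable (paragraph preceding the theorem) and is a group-like projection in $\Linf(\hh{\GG})$ invariant under $\hh{R}$, I set
\[
\theta(y)=\frac{\hh{\psi}(P_\omega\,y\,P_\omega)}{\hh{\psi}(P_\omega)},\qqquad y\in\Linf(\hh{\GG}).
\]
This is manifestly a normal state, and repeating the idempotency computation from the proof of Theorem \ref{wzor} verbatim on $\hh{\GG}$ --- with $P_\omega$ now playing the role there taken by $Q_\omega^\perp$, and using right invariance of $\hh{\psi}$ --- shows $\theta\in\Idemn(\hh{\GG})$. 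The defining formula immediately yields $\sJ_\theta=\Linf(\hh{\GG})P_\omega^\perp$, hence $Q_\theta^\perp=P_\omega$.

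The main obstacle is then to prove $\theta=\dd{\omega}$, for this simultaneously delivers the normality of $\dd{\omega}$, the identification $Q_{\dd{\omega}}^\perp=P_\omega$, and the coideal statement. By uniqueness in the Section \ref{IdempSt} correspondence it suffices to show $\sN_\theta=\sN_{\dd{\omega}}$, and for the coideal conclusion one wants this common value to equal $\dd{\sN_\omega}$. Lemma \ref{L}\eqref{L1} applied to $\theta$ describes
\[
\sN_\theta=\bigl\{y\in\Linf(\hh{\GG})\st\hh{\Delta}(y)(\I\tens P_\omega)=y\tens P_\omega\bigr\},
\]
so the crux is the identification of this set with $\sN_\omega'\cap\Linf(\hh{\GG})$. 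This is an instance of the standard co-duality between integrable $\tau$-invariant left coideals and their associated group-like projections; the argument combines the characterization of $\sN_\omega$ via $P_\omega$ from \cite{KK,FK} with the coideal framework of \cite[Section 3]{eqhs}, the technical point being the reconciliation of the ``$(\I\tens P_\omega)$'' condition with the ``relative commutant in $\Linf(\hh{\GG})$'' description via the pentagon identity for $\ww$.

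Once $\sN_\theta=\dd{\sN_\omega}$ is in hand, the bijection gives $\theta=\dd{\omega}$, and consequently $\dd{\omega}$ is normal, $Q_{\dd{\omega}}^\perp=P_\omega$, and $\sN_{\dd{\omega}}=\dd{\sN_\omega}$. Applying the construction of the first two paragraphs to $\dd{\omega}\in\Idemn(\hh{\GG})$ finally produces $\dd{\dd{\omega}}\in\Idemn(\GG)$ whose associated group-like projection is $Q_{\dd{\omega}}^\perp=P_\omega$, and uniqueness in the Section \ref{IdempSt} correspondence forces $\dd{\dd{\omega}}=\omega$.
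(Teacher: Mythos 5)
Your proposal follows essentially the same route as the paper: define the candidate dual state by the normalized formula $\hh{\psi}(P_\omega\,\cdot\,P_\omega)/\hh{\psi}(P_\omega)$ using the $\hh{\psi}$-square-integrability of $P_\omega$, check idempotency exactly as in Theorem \ref{wzor}, read off $Q_\theta^\perp=P_\omega$, and match the coideal of $\theta$ (described via Lemma \ref{L}\eqref{L1}) with $\dd{\sN_\omega}$. Two remarks. First, the step you label ``the crux'' --- the identification of $\bigl\{y\in\Linf(\hh{\GG})\st\hh{\Delta}(y)(\I\tens P_\omega)=y\tens P_\omega\bigr\}$ with $\sN_\omega'\cap\Linf(\hh{\GG})$ --- is exactly \cite[Theorem 3.1]{FK}, which is how the paper disposes of it in one line; there is nothing to reconcile via the pentagon identity, but you do need to cite (or reprove) that result, since everything else hinges on it. Second, your double definition of $\dd{\omega}$ creates a small circularity: having defined $\dd{\omega}$ a priori as the idempotent state attached to the group-like projection $Q_\omega^\perp$, the equality $\sN_\theta=\dd{\sN_\omega}$ does \emph{not} by itself let the bijection conclude $\theta=\dd{\omega}$ --- for that you would already need to know that the coideal of the state attached to $Q_\omega^\perp$ is $\dd{\sN_\omega}$, which is (a form of) the assertion being proved. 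The clean fix is the paper's: define $\dd{\omega}:=\theta$ outright, obtain uniqueness and $\dd{\dd{\omega}}=\omega$ from the coideal correspondence together with $\dd{\dd{\sN}}=\sN$, and only then recover $(\dd{\omega}\tens\id)(\ww)=Q_\omega^\perp$ by applying the evident relation $P_\mu=Q_{\dd{\mu}}^\perp$ to $\mu=\dd{\omega}$. With these two adjustments your argument is complete and coincides with the paper's.
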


\begin{proof}
Remarks preceding the theorem show that $P_\omega$ is integrable with respect to $\hh{\psi}$. Define a normal state $\dd{\omega}$ on $\Linf(\hh{\GG})$ by setting
\[
\dd{\omega}(y)=\frac{\hh{\psi}(P_\omega\;\!y\;\!P_\omega)}{\hh{\psi}(P_\omega)},\qqquad{y}\in\Linf(\hh{\GG}).
\]
It is easy to see that $\dd{\omega}$ is idempotent ($P_\omega$ is a group-like projection, see the proof of Theorem \ref{wzor}). Additionally, by \cite[Theorem 3.1]{FK}
we have
\begin{equation}\label{ddNom}
\dd{\sN_{\omega}}=\bigl\{y\in\Linf(\hh{\GG}){\bigl.\st\bigr.}\hh{\Delta}(y)(\I\tens{P_\omega})=y\tens{P_\omega}\bigr\}.
\end{equation}
Also it is clear that
\begin{equation}\label{PomQ}
P_\omega=Q_{\dd{\omega}}^\perp,
\end{equation}
so comparing \eqref{ddNom} with Lemma \ref{L}\eqref{L1} (applied to $\dd{\omega}$) we immediately obtain
\[
\dd{\sN_{\omega}}=\sN_{\dd{\omega}}.
\]
In view of the one-to-one correspondence between integrable coideals and idempotent states described in Section \ref{IdempSt}, this determines $\dd{\omega}$ uniquely and guarantees that $\dd{\dd{\omega}}=\omega$. Finally substituting $\dd{\omega}$ for $\omega$ in \eqref{PomQ} we get $Q_\omega^\perp=P_{\dd{\omega}}=(\dd{\omega}\tens\id)(\ww)$.
\end{proof}

The next theorem is in some sense a converse to Theorem \ref{thmcod}.

\begin{theorem}\label{conv_thmcod}
Let $\omega\in\Idem(\GG)$ be such that $\dd{\sN_\omega}$ is integrable. Then $\omega\in\Idemn(\GG)$.
\end{theorem}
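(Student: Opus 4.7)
The plan is to mimic the construction of Theorem \ref{thmcod} in reverse: starting from the integrable coideal $\dd{\sN_\omega}\subset\Linf(\hh{\GG})$, I will produce a $\psi$-integrable group-like projection in $\Linf(\GG)$ and use it to define a normal idempotent state, which I then identify with $\omega$.

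First I would note that $\dd{\sN_\omega}$ is $\hh{\tau}$-invariant: since $\tau_t$ and $\hh{\tau}_t$ are implemented by the same scaling operator on $\Ltwo(\GG)$, the $\tau$-invariance of $\sN_\omega$ is inherited by the relative commutant $\dd{\sN_\omega}=\sN_\omega'\cap\Linf(\hh{\GG})$. Combined with the integrability hypothesis, the bijection of \cite{KK,FK} applied to $\hh{\GG}$ yields $\mu \in \Idem(\hh{\GG})$ with $\sN_\mu = \dd{\sN_\omega}$ and a corresponding $\tau$- and $R$-invariant group-like projection $P_\mu \in \Linf(\GG)$.

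The crucial step is to show that $P_\mu \in \sN_\omega$ as a minimal and central projection. Applying equation \eqref{ddNom} to $\mu$ on $\hh{\GG}$ and invoking co-duality $\dd{\dd{\sN_\omega}} = \sN_\omega$ furnishes the characterization
\[
\sN_\omega = \bigl\{x \in \Linf(\GG) \st \Delta(x)(\I\tens P_\mu) = x\tens P_\mu\bigr\},
\]
which does \emph{not} require $\omega$ to be normal. The group-like identity for $P_\mu$ then forces $P_\mu \in \sN_\omega$. Slicing this characterizing identity, and its adjoint form obtained from $x^*\in\sN_\omega$, with $\nu \in \Linf(\GG)_*$ yields $(\nu\tens\id)\Delta(x)\,P_\mu = \nu(x) P_\mu = P_\mu\,(\nu\tens\id)\Delta(x)$ for every $x \in \sN_\omega$; by the weak Podle\'s condition \cite[Corollary 2.7]{proj}, such slices are $\sigma$-weakly dense in $\sN_\omega$, so $P_\mu$ is central and minimal in $\sN_\omega$. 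Integrability of $\sN_\omega$ then supplies some $x \in \sN_\omega \cap \Dom(\eta)$ with $P_\mu x \neq 0$; since $\Dom(\eta)$ is a left ideal of $\Linf(\GG)$, $P_\mu x \in \Dom(\eta)$, and by minimality $P_\mu x$ is a nonzero scalar multiple of $P_\mu$, giving $P_\mu \in \Dom(\eta)$.

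With $P_\mu \in \Dom(\eta)$ established, I set
\[
\theta(x) = \frac{\psi(P_\mu\,x\,P_\mu)}{\psi(P_\mu)}, \qqquad x \in \Linf(\GG).
\]
This is the vector functional $x \mapsto \tfrac{1}{\psi(P_\mu)}\is{\eta(P_\mu)}{x\,\eta(P_\mu)}$, hence normal; the computation from the proof of Theorem \ref{wzor} (using that $P_\mu$ is group-like and $R$-invariant, together with right invariance of $\psi$) shows it is idempotent, so $\theta \in \Idemn(\GG)$. Clearly $Q_\theta^\perp = P_\mu$, so Lemma \ref{L}\eqref{L1} combined with the characterization above gives $\sN_\theta = \sN_\omega$; uniqueness in the bijection recalled in Section \ref{IdempSt} forces $\theta = \omega$, proving $\omega \in \Idemn(\GG)$. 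The main obstacle in the proof is the identification of $P_\mu$ as a minimal central projection in $\sN_\omega$: whereas the parallel step in Theorem \ref{Qmin} relies on Lemma \ref{L}\eqref{L1}, which presupposes normality of $\omega$, here one must instead derive the corresponding characterization of $\sN_\omega$ via $P_\mu$ from the co-duality $\dd{\dd{\sN_\omega}}=\sN_\omega$ before the Theorem \ref{Qmin} machinery can be applied.
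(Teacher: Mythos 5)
Your proof is correct; it is the mirror image, across the duality $\GG\leftrightarrow\hh{\GG}$, of the argument in the paper. The paper stays on the dual side: writing $\dd{\sN_\omega}=\sN_{\breve{\omega}}$ with $\breve{\omega}\in\Idem(\hh{\GG})$, it notes that $P_\omega$ is a minimal central, hence $\hh{\psi}$-integrable, projection in $\sN_{\breve{\omega}}$, uses it to define a normal idempotent state $\theta$ on $\hh{\GG}$ satisfying $E_\theta\comp{E_{\breve{\omega}}}=E_{\breve{\omega}}$, deduces $\theta\cleq\breve{\omega}$ and hence $\breve{\omega}\in\Idemn(\hh{\GG})$ from Proposition \ref{opencleq}, and only then transfers back to $\GG$ via Theorem \ref{thmcod}. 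You instead bring the group-like projection $P_\mu$ of $\mu=\breve{\omega}$ down to $\Linf(\GG)$, rerun the minimality/centrality/integrability argument of Theorem \ref{Qmin} inside $\sN_\omega$ itself, and identify the resulting normal idempotent state with $\omega$ via Lemma \ref{L}\eqref{L1} and the uniqueness of the state--coideal correspondence. You correctly isolate the one point where care is needed: the characterization of $\sN_\omega=\dd{\sN_\mu}$ in terms of $P_\mu$ must come from \eqref{ddNom} (i.e.\ from \cite[Theorem 3.1]{FK}, which needs no normality of $\omega$), not from Lemma \ref{L}\eqref{L1}. The trade-off: the paper's endgame is shorter, since domination plus Proposition \ref{opencleq} spares it from proving that its auxiliary state equals $\breve{\omega}$ exactly, whereas your route yields more explicit output --- the formula $\omega(x)=\psi(P_\mu\,x\,P_\mu)/\psi(P_\mu)$ and the identity $Q_\omega^\perp=P_\mu$ drop out as byproducts --- and does not need to invoke Theorem \ref{thmcod} at the end.
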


\begin{proof}
The coideal $\sN_\omega$ is $\tau$-invariant, so $\dd{\sN_\omega}$ is $\hh{\tau}$-invariant by
\cite[Proposition 3.2]{ext}. As it is integrable by assumption, $\dd{\sN_\omega}$ must be of the form $\sN_{\breve{\omega}}$ for some $\breve{\omega}\in\Idem(\hh{\GG})$. We will show that $\breve{\omega}$ is normal and hence $\omega=\dd{\breve{\omega}}$ will be shown to be normal. To that end we will extensively use \cite[Theorem 3.1]{FK}.

Consider the projection $P_\omega$ corresponding to $\omega$. Then $P_\omega\in\dd{\sN_\omega}=\sN_{\breve{\omega}}$ and it is minimal and central in $\sN_{\breve{\omega}}$. It follows that it must be integrable, so we can define a normal state $\theta$ on $\Linf(\hh{\GG})$ by
\[
\theta(y)=\frac{\hh{\psi}(P_\omega\;\!y\;\!P_\omega)}{\hh{\psi}(P_\omega)},\qqquad{y}\in\Linf(\hh{\GG}).\]
The state $\theta$ is idempotent, as $P_\omega$ is group-like. Now take any $x\in\sN_{\breve{\omega}}$, so that
\[
\Delta_{\hh{\GG}}(x)(\I\tens{P_\omega})=x\tens{P_\omega}.
\]
Then we have
\[
\begin{split}
E_\theta(x)&=\tfrac{1}{\hh{\psi}(P_\omega)}(\id\tens\hh{\psi})\bigl((\I\tens{P_\omega})\Delta_{\hh{\GG}}(x)(\I\tens{P_\omega})\bigr)\\
&=\tfrac{1}{\hh{\psi}(P_\omega)}(\id\tens\hh{\psi})\bigl(x\tens{P_\omega}\bigr)=x.
\end{split}
\]
This shows that $E_\theta\comp{E_{\breve{\omega}}}=E_{\breve{\omega}}$, so $\theta\cleq\breve{\omega}$ by Lemma \ref{lemcomp}. Since $\theta$ is normal, so is $\breve{\omega}$ by Proposition \ref{opencleq}.
\end{proof}

\begin{remark}
The above result establishes a bijective correspondence between open compact quasi-subgroups of $\GG$ and those of $\hh{\GG}$. It is related to a special case of \cite[Theorem 7.2]{open} which gives a bijective correspondence between normal open quantum subgroups of $\GG$ and normal compact quantum subgroups of $\hh{\GG}$.
\end{remark}

\begin{corollary}
Let $\omega,\nu\in\Idemn(\GG)$. Then $\omega\cleq\mu$ if and only if $Q_\omega^\perp\leq{Q_\mu^\perp}$.
\end{corollary}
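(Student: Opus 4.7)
My plan is to reduce the statement to Lemma \ref{lemcomp}, applied on both $\GG$ and $\hh{\GG}$, using the co-duality of Theorem \ref{thmcod}. The essential observation is that applying Theorem \ref{thmcod} with $\omega$ replaced by $\dd{\omega}$ and invoking $\dd{\dd{\omega}}=\omega$ gives
\[
P_{\dd{\omega}}=(\dd{\omega}\tens\id)(\ww)=Q_\omega^\perp,
\]
so the projections $Q^\perp$ on $\GG$ correspond under the duality to the group-like projections $P$ on $\hh{\GG}$.

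First, I would apply Lemma \ref{lemcomp} on $\hh{\GG}$ to the pair $\dd{\omega},\dd{\mu}\in\Idemn(\hh{\GG})$ (which exist by Theorem \ref{thmcod}) to obtain
\[
Q_\omega^\perp\leq Q_\mu^\perp\;\Longleftrightarrow\;P_{\dd{\omega}}\leq P_{\dd{\mu}}\;\Longleftrightarrow\;\dd{\mu}\cleq\dd{\omega}.
\]
Next, I would check the matching equivalence $\omega\cleq\mu\Leftrightarrow\dd{\mu}\cleq\dd{\omega}$ on the other side. Lemma \ref{lemcomp} reads $\omega\cleq\mu$ iff $\sN_\mu\subset\sN_\omega$. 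Taking relative commutants with $\Linf(\hh{\GG})$ reverses inclusion, i.e.~$\sN_\mu\subset\sN_\omega$ iff $\dd{\sN_\omega}\subset\dd{\sN_\mu}$ (using $\dd{\dd{\sN}}=\sN$). By Theorem \ref{thmcod} one has $\dd{\sN_\omega}=\sN_{\dd{\omega}}$ and similarly for $\mu$, so this last inclusion is $\sN_{\dd{\omega}}\subset\sN_{\dd{\mu}}$, which by Lemma \ref{lemcomp} on $\hh{\GG}$ means precisely $\dd{\mu}\cleq\dd{\omega}$. Chaining the two displayed equivalences yields the corollary.

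The argument is essentially formal once Theorem \ref{thmcod} is available; there is no real obstacle. The only point needing minor care is bookkeeping of the two direction reversals, one from the contravariance of $\omega\mapsto P_\omega$ (resp.~$\omega\mapsto \sN_\omega$) in Lemma \ref{lemcomp}, and one from co-duality of coideals. The two reversals cancel, producing the order-preserving equivalence $\omega\cleq\mu\Leftrightarrow Q_\omega^\perp\leq Q_\mu^\perp$ asserted in the corollary.
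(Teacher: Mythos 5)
Your proof is correct and follows essentially the same route as the paper: Lemma \ref{lemcomp} on $\GG$, the inclusion-reversing co-duality $\sN\mapsto\dd{\sN}$ together with $\dd{\sN_\omega}=\sN_{\dd{\omega}}$, Lemma \ref{lemcomp} on $\hh{\GG}$, and the identification $P_{\dd{\omega}}=Q_\omega^\perp$ from Theorem \ref{thmcod}. The only difference is presentational (you meet in the middle at $\dd{\mu}\cleq\dd{\omega}$ rather than chaining the equivalences linearly), and your bookkeeping of the two order reversals is accurate.
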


\begin{proof}
By Lemma \ref{lemcomp} we have $\omega\cleq\mu$ if and only if $\sN_\mu\subset\sN_\omega$ which is, in turn, equivalent to $\dd{\sN_\omega}\subset\dd{\sN_\mu}$ and thus to $P_{\dd{\omega}}\leq{P_{\dd{\mu}}}$ (again by Lemma \ref{lemcomp}). By Theorem \ref{thmcod} this means that $Q_\omega^\perp\leq{Q_\mu^\perp}$.
\end{proof}

Open quasi-subgroups of a \emph{compact} quantum group have a more precise characterization:

\begin{theorem}\label{findim}
Let $\GG$ be a compact quantum group and $\omega\in\Idem(\GG)$. Then $\omega\in\Idemn(\GG)$ if and only if $\dim{\sN_\omega}<+\infty$.
\end{theorem}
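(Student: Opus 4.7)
The plan is to combine the duality of Theorem \ref{thmcod} with the special structure of $\hh{\GG}$, which is discrete because $\GG$ is compact, and with the finiteness and faithfulness of the Haar state $\psi$ on $\Linf(\GG)$. Compactness guarantees that $\eta$ is everywhere defined and injective on $\Linf(\GG)$, so the restriction $\eta|_{\sN_\omega}\colon\sN_\omega\hookrightarrow\Ltwo(\sN_\omega)$ is an injection with dense range. Consequently $\dim\sN_\omega<\infty$ if and only if $\dim\Ltwo(\sN_\omega)<\infty$, which is in turn equivalent to $P_\omega$ being a finite-rank projection. The theorem therefore reduces to showing that $\omega\in\Idemn(\GG)$ if and only if $P_\omega$ has finite rank.

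For the forward implication, let $\omega\in\Idemn(\GG)$. Theorem \ref{thmcod} produces a normal dual idempotent state $\dd{\omega}\in\Idemn(\hh{\GG})$ satisfying $P_\omega=Q_{\dd{\omega}}^\perp$. Since $\hh{\GG}$ is discrete, Remark \ref{finsupp} applied to $\dd{\omega}$ forces $Q_{\dd{\omega}}^\perp=P_\omega$ to have non-zero components in only finitely many of the finite-dimensional matrix summands $M_{n_\alpha}$ of $\M(\operatorname{c}_0(\hh{\GG}))$. Hence $P_\omega$ is of finite rank and, by the reduction above, $\dim\sN_\omega<\infty$.

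For the converse, assume $\dim\sN_\omega<\infty$. Then $P_\omega$ is a finite-rank projection in $\Linf(\hh{\GG})=\prod_\alpha M_{n_\alpha}$, so it is supported in only finitely many of these finite-dimensional blocks. Since $\hh{\psi}$ acts blockwise as a finite weighted trace on each $M_{n_\alpha}$, this gives $\hh{\psi}(P_\omega)<\infty$, i.e.~$P_\omega$ is $\hh{\psi}$-integrable. The group-like identity $\hh{\Delta}(P_\omega)(\I\tens{P_\omega})=P_\omega\tens{P_\omega}$, via the characterization of $\dd{\sN_\omega}$ recalled in the proof of Theorem \ref{thmcod}, places $P_\omega$ inside $\dd{\sN_\omega}$. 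Thus $\dd{\sN_\omega}$ contains the non-zero positive integrable element $P_\omega$, which forces $\dd{\sN_\omega}$ to be integrable, and Theorem \ref{conv_thmcod} yields $\omega\in\Idemn(\GG)$. The only step requiring care is the verification that $\hh{\psi}$ is finite on finite-rank projections in $\Linf(\hh{\GG})$, but this is transparent from the discrete block structure of the dual.
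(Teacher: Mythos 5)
Your argument is correct, and it follows the same duality-based strategy as the paper, differing only in which of the paper's own tools it deploys at each step. For the forward implication the paper's main proof avoids the dual altogether: it observes that $Q_\omega^\perp$ is a minimal central projection in $\sN_\omega$ (Theorem \ref{Qmin}), so $\sN_\omega$ has a finite dimensional direct summand, and then invokes ergodicity of the action of $\GG$ on $\sN_\omega$ together with \cite[Theorem 3.4]{dCKSS} to conclude $\dim\sN_\omega<+\infty$; your route through Theorem \ref{thmcod}, the identity $P_\omega=Q_{\dd{\omega}}^\perp$ and Remark \ref{finsupp} is exactly the alternative the authors record in the remark following the corollary after Theorem \ref{findim}, and it has the advantage of not relying on the external ergodicity result. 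For the converse the paper establishes $\hh{\psi}(P_\omega)<\infty$ just as you do, but then reruns the construction from the proof of Theorem \ref{thmcod} to produce $\dd{\omega}\in\Idemn(\hh{\GG})$ with $\dd{\sN_\omega}=\sN_{\dd{\omega}}$ and applies Theorem \ref{thmcod} to $\hh{\GG}$; you instead note that the group-like identity places the non-zero positive $\hh{\psi}$-integrable element $P_\omega$ inside $\dd{\sN_\omega}$, so that $\dd{\sN_\omega}$ is integrable by \cite[Proposition 3.3]{KaKhSo} and Theorem \ref{conv_thmcod} applies directly. This is slightly cleaner, since Theorem \ref{conv_thmcod} was designed for precisely this purpose. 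Your preliminary reduction (faithfulness of the Haar state making $\eta|_{\sN_\omega}$ injective, hence $\dim\sN_\omega<\infty$ iff $P_\omega$ has finite rank) and the blockwise finiteness of $\hh{\psi}$ on $\prod_\alpha M_{n_\alpha}$ are both sound and are implicitly used by the paper as well.
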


\begin{proof}
Assume that $\omega\in\Idemn(\GG)$. Then $\sN_\omega$ admits a minimal central projection (namely $Q_\omega^\perp$, cf.~Theorem \ref{Qmin}). It follows that $\sN_\omega$ has a finite dimensional direct summand. Furthermore the natural action of $\GG$ on $\sN_\omega$ is clearly ergodic, so by \cite[Theorem 3.4]{dCKSS} we have $\dim{\sN_\omega}<+\infty$.

Conversely, if $\dim(\sN_\omega)<+\infty$ then $\eta(\sN_\omega)\subset\Ltwo(\GG)$ is finite dimensional. Thus $P_\omega=(\id\tens\omega)(\wW)=(\id\tens\omega)(\ww)$, which is the projection onto $\Ltwo(\sN_\omega)$, intersects  only finitely many isotypical components of $\Linf(\GG)$ (cf.~\cite[Section 2]{qlor}). This shows that $\hh{\psi}(P_\omega)<\infty$ (cf.~the formula for right Haar measure on a discrete quantum group \cite[Equation (2.13)]{qlor}). We can therefore repeat the steps of the proof of Theorem \ref{thmcod} to find that
\[
\dd{\sN_\omega}=\sN_{\dd{\omega}}
\]
where $\dd{\omega}\in\Idemn(\hh{\GG})$. Now, by Theorem \ref{thmcod} applied to $\hh{\GG}$, we find that $\omega\in\Idemn(\GG)$.
\end{proof}

\begin{remark}
Let us note that the first part of the proof of Theorem \ref{findim} can also be performed without using \cite[Theorem 3.4]{dCKSS}. Indeed, if $\GG$ is a compact quantum group and $\omega\in\Idemn(\GG)$ then we already know from Theorem \ref{thmcod} and its proof that
\[
\dd{\omega}(y)=\frac{\hh{\psi}(P_\omega\;\!y\;\!P_\omega)}{\hh{\psi}(P_\omega)},\qqquad{y}\in\ell^\infty(\hh{\GG}).
\]
By Remark \ref{finsupp} the projection $P_\omega$ is finite dimensional, and since $P_\omega$ is also the projection onto $\Ltwo(\sN_\omega)$ we see that $\dim{\sN_\omega}<+\infty$.
\end{remark}

It is well known that a subgroup of a topological group is open if and only if the corresponding quotient space is discrete. The next corollary characterizes open quasi-subgroups of compact quantum groups in an analogous manner.

\begin{corollary}
Let $\GG$ be a compact quantum group and $\omega\in\Idem(\GG)$. Then $\sN_\omega$ has a finite dimensional direct summand if and only if $\omega\in\Idemn(\GG)$.
\end{corollary}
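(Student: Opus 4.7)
The plan is to deduce this corollary almost directly from Theorem \ref{findim}, using only an additional input from the theory of ergodic actions of compact quantum groups on von Neumann algebras. The forward direction ($\omega\in\Idemn(\GG)\Rightarrow\sN_\omega$ has a finite dimensional direct summand) is essentially immediate: Theorem \ref{findim} gives $\dim\sN_\omega<+\infty$, and then $\sN_\omega$ is itself a finite dimensional direct summand (of itself). No further work is needed.

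For the converse, suppose $\sN_\omega$ admits a non-zero finite dimensional direct summand. First I would record that the comultiplication $\Delta$ restricted to $\sN_\omega$ is an ergodic action of $\GG$: a fixed element $x\in\sN_\omega$ with $\Delta(x)=\I\tens{x}$ is in particular a left invariant element of $\Linf(\GG)$, and since $\GG$ is compact the fixed point algebra of the left regular action on $\Linf(\GG)$ is $\mathbb{C}\I$. Now by \cite[Theorem 3.4]{dCKSS} an ergodic action of a compact quantum group on a von Neumann algebra which has a finite dimensional direct summand forces the algebra to be finite dimensional, so $\dim\sN_\omega<+\infty$. Then Theorem \ref{findim} yields $\omega\in\Idemn(\GG)$.

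There is essentially no obstacle here, since the hard work was already carried out in Theorem \ref{findim}; the corollary is a repackaging which trades the statement ``$\sN_\omega$ is finite dimensional'' for the weaker-looking ``$\sN_\omega$ has a finite dimensional direct summand,'' the equivalence being precisely the consequence of ergodicity of $\Delta|_{\sN_\omega}$ noted above. This parallels the very first step of the proof of Theorem \ref{findim}, where the finite dimensional summand furnished by $Q_\omega^\perp$ was promoted to finite dimensionality of the whole $\sN_\omega$ via the same \cite[Theorem 3.4]{dCKSS}.
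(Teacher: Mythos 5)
Your proof is correct and follows exactly the paper's own argument: both directions reduce to Theorem \ref{findim}, with the converse using ergodicity of $\Delta|_{\sN_\omega}$ together with \cite[Theorem 3.4]{dCKSS} to promote a finite dimensional direct summand to finite dimensionality of all of $\sN_\omega$. Your explicit justification of ergodicity (an invariant element of $\sN_\omega$ is a left-invariant element of $\Linf(\GG)$, hence scalar by compactness) is a detail the paper leaves implicit, but the route is the same.
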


\begin{proof}
If $\omega\in\Idemn(\GG)$ then $\dim{\sN_\omega}<+\infty$, so $\sN_\omega$ is a direct sum of matrix algebras. Conversely, if $\sN_\omega$ has a finite dimensional direct summand then by \cite[Theorem 3.4]{dCKSS} we have $\dim{\sN_\omega}<+\infty$ so $\omega\in\Idemn(\GG)$ by Theorem \ref{findim}.
\end{proof}

As another result of similar nature consider the following proposition:

\begin{proposition}\label{findimq}
Let $\GG$ be a locally compact quantum group and let $\omega\in\Idem(\GG)$ be such that $\dim{\sN_\omega}<+\infty$. Then $\GG$ is compact and consequently $\omega\in\Idemn(\GG)$.
\end{proposition}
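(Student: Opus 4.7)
The plan is to reduce the statement to Theorem \ref{findim} by first showing that $\GG$ must be compact, i.e.~that the right Haar weight $\psi$ is finite.

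First I would observe that $\sN_\omega$ is a unital von Neumann subalgebra of $\Linf(\GG)$: it is the range of the unital normal conditional expectation $E_\omega$, so $\I\in\sN_\omega$. Being finite dimensional, $\sN_\omega$ is a finite direct sum of matrix algebras and in particular the $\sigma$-weak topology on $\sN_\omega$ coincides with the norm topology. By assumption $\sN_\omega$ is integrable, so the set $\gM_\psi\cap\sN_\omega$ of elements of $\sN_\omega$ on which $\psi$ is finite is $\sigma$-weakly dense in $\sN_\omega$; combined with the previous remark, this density forces $\gM_\psi\cap\sN_\omega=\sN_\omega$. In particular $\psi(\I)<+\infty$.

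Since a locally compact quantum group whose right Haar weight is finite is compact, this shows that $\GG$ is compact. With this in hand, Theorem \ref{findim} applies directly: $\omega\in\Idem(\GG)$ with $\dim\sN_\omega<+\infty$ yields $\omega\in\Idemn(\GG)$.

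I do not foresee any substantial obstacle; the only slightly delicate point is the passage from semifiniteness of $\psi|_{\sN_\omega}$ to finiteness, which hinges on the elementary observation that on a finite dimensional von Neumann algebra any $\sigma$-weakly dense subspace is the whole algebra. Everything else is an application of earlier results in the paper.
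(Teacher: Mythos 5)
Your argument is correct and is essentially the paper's own proof: integrability of the finite dimensional coideal $\sN_\omega$ forces $\psi(\I)<+\infty$, hence $\GG$ is compact, and Theorem \ref{findim} finishes the job. The paper states the step from semifiniteness to finiteness without comment; your elaboration (a $\sigma$-weakly dense subspace of a finite dimensional algebra is the whole algebra) is exactly the right justification.
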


\begin{proof}
We know that the coideal $\sN_\omega$ is integrable, so if $\dim{\sN_\omega}<+\infty$, we have $\psi(\I)<+\infty$, so that $\GG$ is compact. The last statement follows from Theorem \ref{findim}.
\end{proof}

Proposition \ref{findimq} corresponds to the elementary fact that if a quotient by a compact subgroup is finite then the original group must also be compact.

\begin{theorem}
Let $\GG$ be a  compact quantum group and $\sN\subset\Linf(\GG)$ a finite dimensional coideal. Then there exists $\omega\in\Idemn(\GG)$ such that $\sN=\sN_\omega$. In particular $\sN$ is $\tau$-invariant.
\end{theorem}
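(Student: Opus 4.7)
My plan is to exhibit the idempotent state $\omega$ through its associated group-like projection, exploiting the correspondence between $\Idem(\GG)$, integrable $\tau$-invariant coideals, and scaling-invariant group-like projections in $\Linf(\hh\GG)$ recalled in Section~\ref{IdempSt}. Let $P\in\B(\Ltwo(\GG))$ be the orthogonal projection onto $\Ltwo(\sN)$. Since $\GG$ is compact, $\psi$ is a state and $\eta$ restricted to $\sN$ is a linear bijection onto $\Ltwo(\sN)$, so $P$ is finite rank. I would verify the three properties (i) $P\in\Linf(\hh\GG)$, (ii) $\hh\Delta(P)(\I\tens P)=P\tens P$, and (iii) $\hh\tau_t(P)=P$ for every $t\in\RR$; then \cite[Theorem~3.1]{FK} (via the correspondences of Section~\ref{IdempSt}) would produce $\omega\in\Idem(\GG)$ with $P_\omega=P$, so that $\Ltwo(\sN_\omega)=\operatorname{ran}P=\Ltwo(\sN)$, and the separating property of $\eta(\I)$ yields $\sN_\omega=\sN$. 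Theorem~\ref{findim} places $\omega$ in $\Idemn(\GG)$ since $\dim\sN<+\infty$, and $\tau$-invariance of $\sN=\sN_\omega$ is then automatic.

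For (i), the coideal condition $\Delta(\sN)\subset\Linf(\GG)\vtens\sN$ translates, via $\eta$ and the intertwining of $\Delta$ with $\ww$, into invariance of $\Ltwo(\sN)$ under the commutant of the standard representation of $\Linf(\hh\GG)$ on $\Ltwo(\GG)$; the bicommutant theorem then places $P$ in $\Linf(\hh\GG)$. For (ii), the additional input is that $\sN$ is a unital $*$-subalgebra: testing the identity on simple tensors $\eta(x)\tens\eta(y)$ for $x,y\in\sN$ and using the commutation of $\ww$ with left multiplication by elements of $\sN$ reduces both sides to $\eta(xy)\in\Ltwo(\sN)$, which holds by closure of $\sN$ under multiplication.

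The principal obstacle is (iii), since a priori $\tau$-invariance of $\sN$ is not available (which would otherwise immediately yield $\hh\tau$-invariance of $P$ by co-duality, cf.\ \cite[Proposition 3.2]{ext}). I would exploit discreteness of $\hh\GG$: $\Linf(\hh\GG)=\bigoplus_\alpha M_{n_\alpha}$ and $\hh\tau$ acts on each block by an inner automorphism implemented by the Woronowicz $F$-matrix of the irreducible representation $\alpha$. Finite-rankness of $P$ forces $P=\sum_\alpha P_\alpha$ with only finitely many non-zero $P_\alpha$, and each such $P_\alpha$ is the orthogonal projection onto $\eta(\sN\cap C_\alpha)$, where $C_\alpha\subset\Linf(\GG)$ is the $\alpha$-matrix-coefficient space. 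Since $\tau$ stabilizes $C_\alpha$, and the ergodic coaction of $\GG$ on $\sN$ (ergodicity holds because $\sN^\GG=\sN\cap\mathbb{C}\I=\mathbb{C}\I$) combined with the $*$-subalgebra structure pins down $\sN\cap C_\alpha$ up to representation-theoretic compatibility with the $F$-matrix, I expect $P_\alpha$ to commute with the $\hh\tau$-implementing element, giving $\hh\tau_t(P)=P$. The delicate structural step is likely best carried out using \cite[Theorem~3.4]{dCKSS} on ergodic actions of compact quantum groups on finite-dimensional algebras.
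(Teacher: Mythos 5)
Your steps (i) and (ii) --- placing $P$ in $\ell^\infty(\hh{\GG})$ via the commutant argument and verifying the group-like identity by testing on vectors coming from $\sN$ --- coincide with what the paper does (note only that for (ii) you must test on $\eta(x)\tens\xi$ with $x\in\sN$ and $\xi\in\Ltwo(\GG)$ \emph{arbitrary}, not on $\eta(x)\tens\eta(y)$ with both legs in $\sN$). The genuine gap is step (iii). You correctly identify $\hh{\tau}$-invariance of $P$ as the obstacle to invoking the correspondence of Section \ref{IdempSt}, which is stated only for \emph{scaling-invariant} group-like projections, but what you offer there is not a proof: the assertion that each block $P_\alpha$ ``commutes with the $\hh{\tau}$-implementing element'' is exactly what needs to be shown, and neither ergodicity of the action on $\sN$ nor vague ``representation-theoretic compatibility with the $F$-matrix'' forces the projection onto $\eta(\sN\cap C_\alpha)$ to commute with $F_\alpha$. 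As written, (iii) restates the difficulty rather than resolving it.

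The paper sidesteps the issue, and you can too. Since $P$ is a finite-rank group-like projection in $\ell^\infty(\hh{\GG})$, it meets only finitely many matrix blocks, so $\hh{\psi}(P)<+\infty$ (quantum dimensions are $\geq 1$; cf.\ Remark \ref{finsupp} and the proof of Theorem \ref{findim}). One may therefore define directly the normal state
\[
\mu(y)=\frac{\hh{\psi}(P\,y\,P)}{\hh{\psi}(P)},\qquad y\in\ell^\infty(\hh{\GG}),
\]
which is idempotent by the computation in the proof of Theorem \ref{wzor} --- that computation needs only the group-like property of $P$ and right invariance of $\hh{\psi}$, not scaling invariance --- and which satisfies $Q_\mu^\perp=P$. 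Theorem \ref{thmcod} applied to $\mu\in\Idemn(\hh{\GG})$ then yields $\omega=\dd{\mu}\in\Idemn(\GG)$ with $P_\omega=Q_\mu^\perp=P$, so $\Ltwo(\sN_\omega)=\Ltwo(\sN)$ and, by injectivity of $\eta$ and finite-dimensionality, $\sN_\omega=\sN$. The $\hh{\tau}$-invariance of $P$ --- and hence the $\tau$-invariance of $\sN$ --- then comes out as a \emph{consequence} (Lemma \ref{L}\eqref{L3} and Remark \ref{sigmaRem} applied to $\mu$) rather than being an input. If you wish to keep your original route through the correspondence of Section \ref{IdempSt}, you must supply an actual proof of (iii); the natural one is precisely this detour through the normal idempotent state on $\hh{\GG}$.
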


\begin{proof}
Let $P\in\B(\Ltwo(\GG))$ be the projection onto $\Ltwo(\sN)$. We first show that $P$ belongs to $\ell^\infty(\hh{\GG})$. Indeed: for $\mu\in\C(\GG)^*$ the operator $x_\mu$ which defined by
\[
x_\mu\eta(a)=\eta\bigl((\mu\tens\id)\Delta(a)\bigr),\qqquad{a}\in\C(\GG)
\]
belongs to $\ell^\infty(\hh{\GG})'$ and the span of such operators is a $\sigma$-weakly dense subspace of $\ell^\infty(\hh{\GG})'$. Furthermore for any $a\in\C(\GG)$ and $\mu\in\C(\GG)^*$ we have
\[
Px_\mu\eta(a)=P\eta\bigl((\mu\tens\id)\Delta(a)\bigr)=\eta\bigl((\mu\tens\id)\Delta(a)\bigr)=x_\mu\eta(a),
\]
since $\eta\bigl((\mu\tens\id)\Delta(a)\bigr)$ clearly belongs to $\Ltwo(\sN)$. It follows that $Px_\mu{P}=x_\mu{P}$ and hence $P$ commutes with $\ell^\infty(\hh{\GG})'$.

Now we show that $P$ is a group-like projection in $\ell^\infty(\hh{\GG})$. Obviously it is enough to show that $\hh{\Delta}^{\text{\tiny{op}}}(P)(P\tens\I)=P\tens{P}$, i.e.
\begin{equation}\label{PP}
\ww^*(\I\tens{P})\ww(P\tens\I)=P\tens{P}.
\end{equation}
This follows from the following calculation: take $x\in\sN$ and $\xi\in\Ltwo(\GG)$. Then, since $P\in\sN'$, denoting $\eta(\I)$ by $\Omega_\psi$ we obtain
\[
\begin{split}
\ww^*(\I\tens{P})\ww(P\tens\I)\bigl(\eta(x)\tens\xi\bigr)&=\ww^*(\I\tens{P})\ww\bigl(\eta(x)\tens\xi\bigr)\\
&=\ww^*(\I\tens{P})\bigl(\Delta(x)(\Omega_\psi\tens\xi)\bigr)\\
&=\ww^*\bigl(\Delta(x)(\Omega_\psi\tens{P\xi})\bigr)=\eta(x)\tens{P\xi}
\end{split}
\]
and \eqref{PP} follows.

The next step is to note that $P$ is integrable for $\hh{\psi}$ (the argument is identical to that used in the proof of Theorem \ref{findim}). It follows that we can define a functional $\mu$ on $\ell^\infty(\hh{\GG})$ by
\[
\mu(y)=\frac{\hh{\psi}(P\;\!y\;\!P)}{\hh{\psi}(P)},\qqquad{y}\in\ell^\infty(\hh{\GG})
\]
and observe that it is clearly a normal state and it is idempotent (because $P$ is a group-like projection). Moreover $P=Q_\mu^\perp$. Thus, by Theorem \ref{thmcod} $P=P_{\dd{\mu}}$ and consequently, setting $\omega=\dd{\mu}$, we obtain $\sN=\sN_\omega$.
\end{proof}

Let us note that although intersection of open quantum subgroups of a locally compact quantum group is open (\cite[Corollary 2.29]{alex}), this no longer needs to be true for quasi-subgroups. More precisely let $\GG$ be a locally compact quantum group and let $\omega_1,\omega_2\in\Idemn(\GG)$ (i.e.~both states correspond to open compact quasi-subgroups). Then it need not be the case that $\omega_1\wedge\omega_2\in\Idemn(\GG)$. Indeed, consider a non-compact locally compact quantum group $\KK$ generated by two compact open subgroups $\HH_1$ and $\HH_2$ (cf.~\cite[Section 3]{Hopf}). Let $\mu_1$ and $\mu_2$ be the corresponding idempotent states of Haar type on $\C_0^\uu(\KK)$. Then first of all we have
\[
\sN_{\mu_i}=\Linf(\KK/\HH_i),\qqquad{i}=1,2.
\]
Put $\GG=\hh{\KK}$ and $\omega_i=\dd{\mu_i}$ for $i=1,2$. Then $\omega_1,\omega_2\in\Idemn(\GG)$ and
\[
\sN_{\omega_i}=\dd{\sN_{\mu_i}}=\dd{\Linf(\KK/\HH_i)}=\Linf(\hh{\HH_i}),\qqquad{i}=1,2.
\]
Furthermore, since $\KK=\hh{\GG}$ is generated by $\HH_1$ and $\HH_2$, we have
\[
\sN_{\omega_1\wedge\omega_2}=\sN_{\omega_1}\vee\sN_{\omega_2}=\Linf(\hh{\HH_1})\vee\Linf(\hh{\HH_2})=\Linf(\GG),
\]
so $\omega_1\wedge\omega_2$ must be the counit of $\GG$. However the latter is normal if and only if $\GG$ is discrete which is not the case since $\KK$ is not compact. It follows that $\omega_1\wedge\omega_2$ is not normal.

The next theorem sheds more light on this situation.

\begin{theorem}
Let $\omega_1,\omega_2\in\Idemn(\GG)$. Then $\omega_1\wedge\omega_2\in\Idemn(\GG)$ if and only if $\dd{\omega_1}\vee\dd{\omega_2}\neq{0}$.
\end{theorem}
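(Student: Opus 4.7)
The plan is to reduce the statement to Theorem \ref{vee}\eqref{vee3} applied on $\hh{\GG}$, using the co-duality of coideals together with the duality of Theorem \ref{thmcod} and its partial converse Theorem \ref{conv_thmcod}. The key identity I would first establish is
\[
\dd{\sN_{\omega_1\wedge\omega_2}}=\sN_{\dd{\omega_1}}\cap\sN_{\dd{\omega_2}}.
\]
This follows by combining three ingredients: the definitional equality $\sN_{\omega_1\wedge\omega_2}=\sN_{\omega_1}\vee\sN_{\omega_2}$ from Section \ref{Lattice}, the trivial commutant identity $(\sN_1\vee\sN_2)'=\sN_1'\cap\sN_2'$ intersected with $\Linf(\hh{\GG})$ (which gives $\dd{(\sN_1\vee\sN_2)}=\dd{\sN_1}\cap\dd{\sN_2}$ for any two left coideals), and the identity $\dd{\sN_{\omega_i}}=\sN_{\dd{\omega_i}}$ furnished by Theorem \ref{thmcod}.

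Next, I would apply Theorem \ref{vee}\eqref{vee3} on the dual side: the functional $\dd{\omega_1}\vee\dd{\omega_2}$ is non-zero precisely when $\sN_{\dd{\omega_1}}\cap\sN_{\dd{\omega_2}}$ is non-zero and integrable. In view of the displayed identity above, this is equivalent to $\dd{\sN_{\omega_1\wedge\omega_2}}$ being non-zero and integrable.

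Finally, I would bridge this to the normality of $\omega_1\wedge\omega_2$. For the ``if'' direction, integrability of $\dd{\sN_{\omega_1\wedge\omega_2}}$ is exactly the hypothesis of Theorem \ref{conv_thmcod}, which then gives $\omega_1\wedge\omega_2\in\Idemn(\GG)$. For the ``only if'' direction, if $\omega_1\wedge\omega_2\in\Idemn(\GG)$, then Theorem \ref{thmcod} applies to produce $\dd{(\omega_1\wedge\omega_2)}\in\Idemn(\hh{\GG})$ with $\dd{\sN_{\omega_1\wedge\omega_2}}=\sN_{\dd{(\omega_1\wedge\omega_2)}}$; this coideal is integrable as it arises from an idempotent state, and it is non-zero in the sense required by Theorem \ref{vee}\eqref{vee3} because by Theorem \ref{Qmin} it contains the non-zero square-integrable projection $Q_{\dd{(\omega_1\wedge\omega_2)}}^\perp$.

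All the required tools are already in place, so I do not expect a genuine obstacle; the only small point of care is to verify that ``non-zero'' in the statement of Theorem \ref{vee}\eqref{vee3} lines up with what the duality delivers, which is settled by the preceding observation about $Q_{\dd{(\omega_1\wedge\omega_2)}}^\perp$.
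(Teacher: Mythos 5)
Your proof is correct and takes essentially the same route as the paper: both arguments hinge on the identity $\dd{\sN_{\omega_1\wedge\omega_2}}=\dd{\sN_{\omega_1}\vee\sN_{\omega_2}}=\dd{\sN_{\omega_1}}\cap\dd{\sN_{\omega_2}}=\sN_{\dd{\omega_1}}\cap\sN_{\dd{\omega_2}}$ combined with the duality of Theorem \ref{thmcod}. The only (immaterial) difference is in the ``if'' direction, where the paper deduces normality of $\dd{\omega_1}\vee\dd{\omega_2}$ from Proposition \ref{opencleq} and then applies $\dd{\dd{(\cdot)}}=(\cdot)$, while you route through Theorem \ref{vee}\eqref{vee3} and Theorem \ref{conv_thmcod}; both are one-step variants of the same duality.
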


\begin{proof}
Assume first that $\omega_1\wedge\omega_2$ is a normal state. Then
\[
\sN_{\dd{\omega_1\wedge\omega_2}}=\dd{\sN_{\omega_1\wedge\omega_2}}=\dd{\sN_{\omega_1}\vee\sN_{\omega_2}}=\dd{\sN_{\omega_1}}\cap\dd{\sN_{\omega_2}}=\sN_{\dd{\omega_1}\vee\dd{\omega_2}}
\]
so that $\dd{\omega_1}\vee\dd{\omega_2}=\dd{\omega_1\wedge\omega_2}\neq{0}$.

Conversely, if $\dd{\omega_1}\vee\dd{\omega_2}\neq{0}$ then $\dd{\omega_1}\vee\dd{\omega_2}\in\Idemn(\hh{\GG})$, so
\[
\omega_1\wedge\omega_2=\dd{\dd{\omega_1}\vee\dd{\omega_2}}
\]
belongs to $\Idemn(\GG)$, i.e.~it is normal.
\end{proof}

\subsection*{Acknowledgment}

The authors wish to thank Adam Skalski for helpful comments and the referee for suggesting improvements leading, in particular, to Theorem \ref{conv_thmcod}. Research presented in this paper was partially supported by the NCN (National Science Center, Poland) grant no.~2015/17/B/ST1/00085.

\end{document}